\newtheorem{theorem}{Theorem}[section]
\newtheorem{lemma}{Lemma}[section]
\newtheorem{corollary}{Corollary}[section]
\newtheorem{definition}{Definition}[section]
\newtheorem{example}{Example}[section]
\def\s{s}
\def\R{\mathbb{R}}
\def\M{\mathcal{M}}
\def\m{\mathrm{m}}
\def\A{\mathcal{A}}
\def\F{\mathcal{F}}
\def\diag{\mathrm{diag}}
\def\w{\omega}
\def\x{\mathrm{x}}
\def\sd{\mathrm{sd}}
\def\ICA{ICA$_{SG}$}
\def\D{\mathcal{D}}
\newcommand{\argmin}{\operatornamewithlimits{argmin}}
\newcommand{\argmax}{\operatornamewithlimits{argmax}}
\def\z{\mathrm{z}}
\def\1{\mathds{1}}
\def\for{\mbox{  for }}
\def\cov{\mathrm{cov}}
\def\mean{\mathrm{mean}}
\def\det{\mathrm{det}}
\def\cov{\mathrm{cov}}
\numberwithin{equation}{section}
\journal{Pattern Recognition}
\begin{document}

\begin{frontmatter}



\title{ICA based on the data asymmetry}

\author{P. Spurek, J. Tabor}
\address{Faculty of Mathematics and Computer Science, 
Jagiellonian University, 
\L ojasiewicza 6, 
30-348 Cracow, 
Poland}
\ead{przemyslaw.spurek@ii.uj.edu.pl,jacek.tabor@ii.uj.edu.pl}

\author{P. Rola}
\address{
Department of Mathematics of the Cracow University of Economics, 
Rakowicka 27,
31-510 Cracow, 
Poland}
\ead{przemyslaw.rola@outlook.com}
\author{M. Ociepka}
\address{Faculty of Mathematics and Computer Science, 
Jagiellonian University, 
\L ojasiewicza 6, 
30-348 Cracow, 
Poland}



\begin{abstract}
Independent Component Analysis (ICA) - one of the basic tools in data analysis - aims to find a coordinate system in 
which the components of the data are independent. 
Most of existing methods are based on the minimization of the function of fourth-order moment (kurtosis).
Skewness (third-order moment) has received much less attention.

In this paper we present a competitive approach to ICA based on the Split Gaussian distribution, which is well adapted to asymmetric data. 
Consequently, we obtain a method which works better than the classical 
approaches, especially in the case when the underlying density is not symmetric, which is a typical situation in the color distribution in images.
\end{abstract}

\begin{keyword}
 ICA \sep  Split Normal distribution \sep  skewness.
\end{keyword}

\end{frontmatter}



\section{Introduction}

Independent component analysis (ICA) is one of the most popular methods of data analysis and preprocessing. Historically, Herault and Jutten \cite{herault1986space} seem to be the first (around 1983) to have addressed the problem of ICA to separate mixtures of independent signals.

  In signal processing ICA is a computational method for separating a multivariate signal into additive subcomponents and has been applied in magnetic resonance \cite{beckmann2004probabilistic}, MRI \cite{beckmann2005tensorial,rodriguez2012noising}, EEG analysis \cite{brunner2007spatial,delorme2007enhanced,zhang2013bayesian},
fault detection \cite{choi2005fault}, financial time series \cite{kiviluoto1998independent} and  seismic recordings \cite{haghighi2008ica}.
Moreover, it is hard to overestimate the role of ICA in pattern recognition and image analysis; its applications include face recognition \cite{yang2005kernel,dagher2006face}, facial action recognition~\cite{chuang2006recognizing},  image filtering \cite{tsai2006independent}, texture segmentation \cite{jenssen2003independent}, object recognition~\cite{bressan2003using,tao2016ensemble}, image modeling \cite{kim2005iterative}, embedding graphs in pattern-spaces \cite{luo2003spectral,luo2002independent}, multi-label learning \cite{xu2016local} and feature extraction \cite{lai2014multilinear}. The calculation of ICA is discussed in several papers \cite{secchi2016hierarchical, hyvarinen2004independent,lee1999independent,cardoso1989source,pham1997blind,comon1994independent,du2016hyperspectral}, where the problem is given various names, in particular it is also called ``source separation problem''.

ICA is similar in many aspects to principal component analysis (PCA). In PCA we look for an orthonormal change of basis so that the components are not
linearly dependent (uncorrelated).
ICA can be described as a search for the optimal basis (coordinate system) in which the components are independent. Let us now, for the readers convenience, describe how the 
ICA works. Data is represented by the random vector $\x$ 
 and the components as the random vector~$s$. 
 The aim is to transform the observed data $\x$ into maximally independent components $s$ with respect to some measure 
of independence. Typically we use a linear static transformation $W$, called the {\em transformation matrix}, combined with the formula $s = W \x$. 

Most ICA methods are based on the maximization of non-Gaussianity. This follows from the fact that one of the theoretical foundations of ICA is given by the dual view at the Central Limit Theorem \cite{hyvarinen2000independent}, which states that the distribution of the sum (average or linear combination) of $N$ independent random variables approaches Gaussian as  $N\rightarrow \infty$. Obviously if all source variables are Gaussian, ICA will not work. 

\begin{figure}[!h]
\normalsize
\begin{center}
\includegraphics[width=5in]{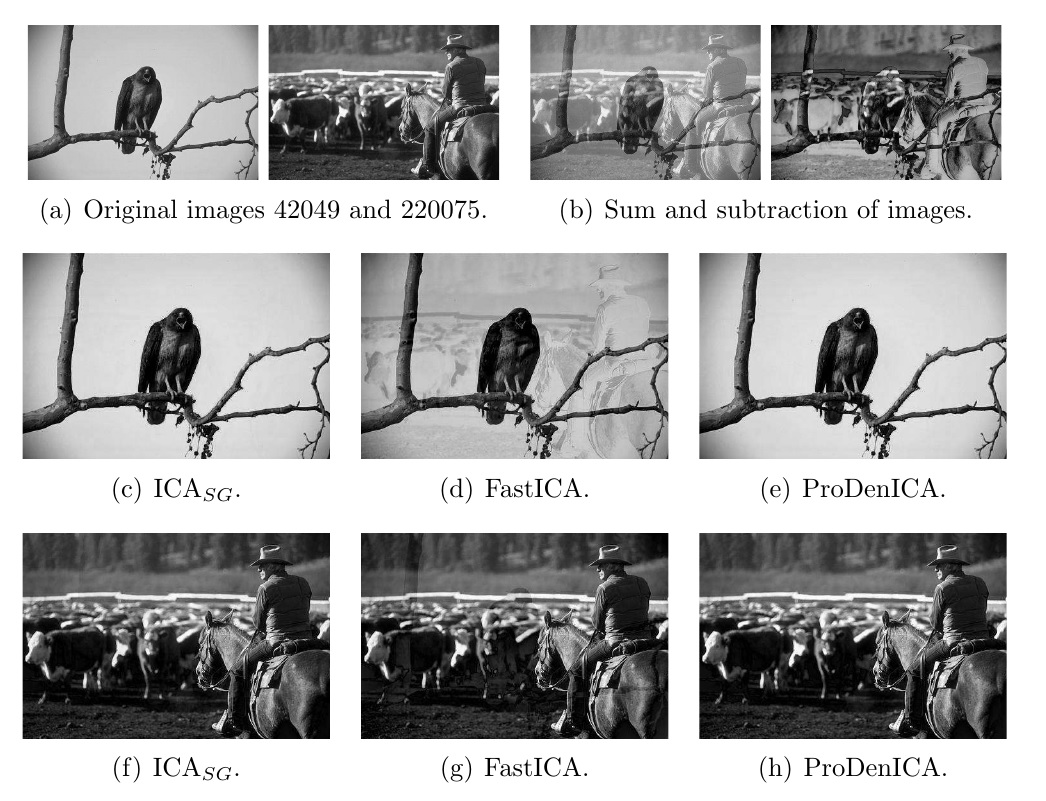}
\end{center}
\caption{Comparison of images separation by our method (\ICA), with FastICA and ProDenICA.}
\label{fig:image_ICA_int}
\end{figure}

The classical measure of non-Gaussianity is kurtosis (the forth central moment), which can be both positive or negative. Random variables that have a negative kurtosis are called subgaussian, and those with the positive one are called supergaussian. Supergaussian random variables have typically a ``spiky'' pdf with heavy tails, i.e. the pdf is relatively large at zero and at large values of the variable, while being small for intermediate values (ex. the Laplace distribution). Typically non-Gaussianity is measured by the absolute value of kurtosis (the square of kurtosis can also be used). 
Thus many methods of finding independent components are based on fitting a density with similar kurtosis as the data, and consequently are very sensitive to the existence of outliers. Moreover, typically data sets are bounded, and therefore the credible estimation of tails is not easy. Another problem with these methods, is that they usually assume that the underlying density is symmetric, which is rarely the case.

In our work we introduce and explore a new approach \ICA, based on the asymmetry of the data, which can be measured by the third central moment (skewness). Any symmetric data, in particular gaussian, has skewness equal to zero.
Negative values of skewness indicate the data skewed to the left and the positive ones indicate the data skewed to the right\footnote{By skewed to the left, we mean that the left tail is long relative to the right tail.
Similarly, skewed to the right means that the long tail is on the right-hand side.
}. Consequently, skewness is a natural measure of non-Gaussianity. In our approach, instead of approximating the data by product of densities
with heavy tails, we approximate it by a product of 
asymmetric densities (so called Split Gaussians).

Contrary to classical approaches which consider third or fourth central moment, our algorithm is based on second moments. This is a consequence of the fact that Split Gaussian distributions arise from merging two opposite halves of normal distributions in their common mode (for more information see Section \ref{SGD}). Therefore  we use only second order moments to describe skewness in dataset, and therefore we obtain an effective ICA method which is resistant to outliers.

\begin{figure*}[!h]
\normalsize
\begin{center}
\includegraphics[width=4.5in]{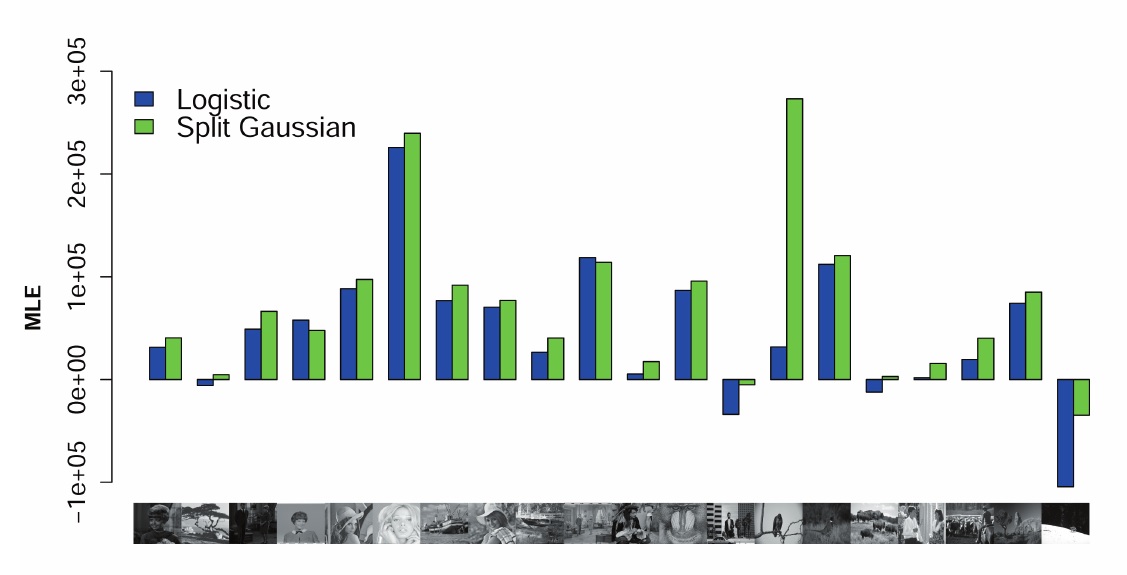} 
\end{center}
\caption{MLE estimation for image histograms with respect to Logistic and Split Gaussian distributions.}
\label{fig:MLE}
\end{figure*}

The results of classical ICA and \ICA \ in the case of image separation (for more detail comparison we refer to Section \ref{ex}) is presented in Fig. \ref{fig:image_ICA_int}. In the experiment we mixed two images (see Fig. \ref{fig:image_ICA_int} a) by adding and subtracting them (see Fig. \ref{fig:image_ICA_int} b). Our approach gives essentially better results than the classical FastICA approach, compare Fig. \ref{fig:image_ICA_int} c) to Fig. \ref{fig:image_ICA_int} d) and  Fig. \ref{fig:image_ICA_int} f) to Fig. \ref{fig:image_ICA_int} g). In the case of classical ICA we can see artifacts in background, which means that the method does not separate signal properly. On the other hand, ProDenICA and \ICA \ almost perfectly recovered images, compare Fig. \ref{fig:image_ICA_int} c) to Fig. \ref{fig:image_ICA_int} e) and Fig. \ref{fig:image_ICA_int} f) to Fig. \ref{fig:image_ICA_int} h).

\begin{figure}[!h] 
\normalsize
\begin{center}
\includegraphics[width=5in]{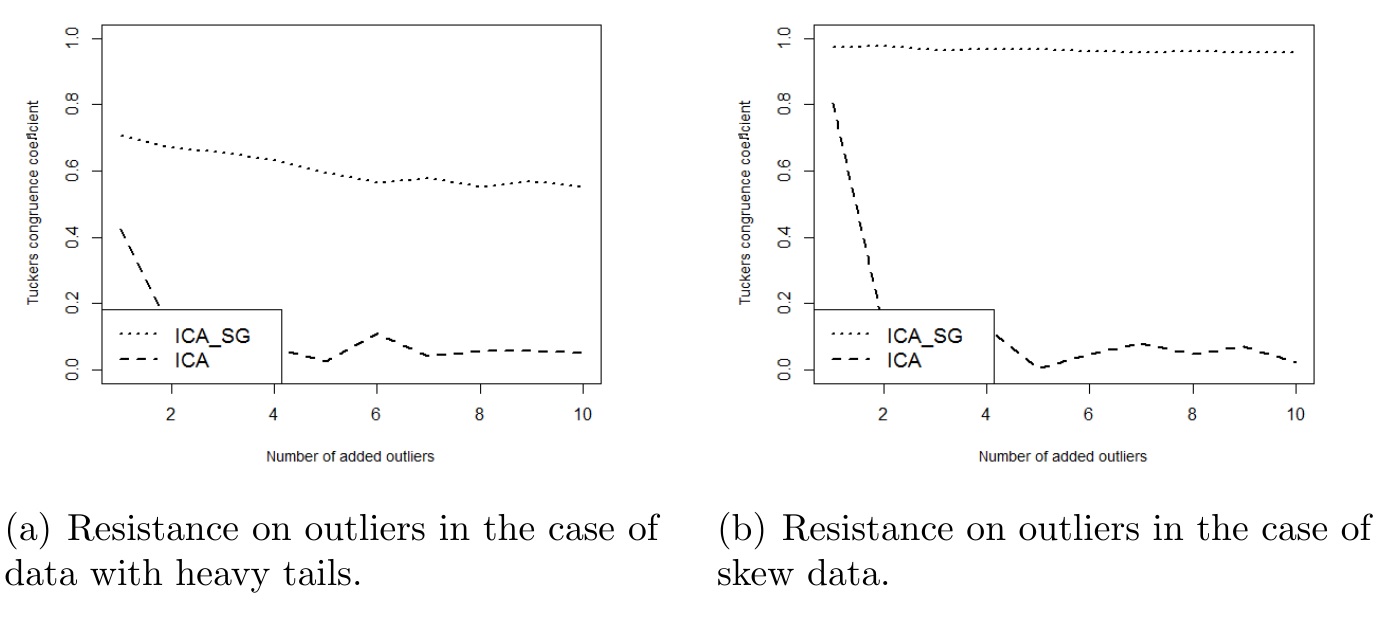} 
\end{center}
\caption{Comparison between our approach and classical ICA in the case of resistance on outliers.}
\label{fig:out}
\end{figure}


In general, \ICA \  in most cases gives better results than other ICA methods, see Section \ref{ex}, while its numerical complexity lies below the methods which obtain
comparable results, that is ProDenICA and PearsonICA.
This is caused in particular by the fact that asymmetry is more common than heavy tails in real data sets -- we performed the symmetry test by using R package {\tt lawstat} \cite{test} with 5 percent confidence ratio, and it occurred that all image datasets we used in our paper have asymmetric densities.
We also verified it in the case of density estimation of our images. We found optimal parameters of Logistic and Split Gaussian distributions and compared the values of MLE function in Fig. \ref{fig:MLE}. As we see, in most cases Split Gaussian distribution fits the data better than the Logistic one.

Summarizing the results obtained in the paper, our method works better than classical approaches for asymmetric data, and is 
more resistant to outliers (see Example \ref{ex:out}).

\begin{example} \label{ex:out}

We consider the data with heavy tails (a sample from the Logistic distribution) and skewed ones (a sample from the Split Normal distribution). We added to the data outliers uniformly generated from 
rectangle $[\min(X_1)-\sd(X_1),\max(X_1)+\sd(X_1)]\times[\min(X_2)-\sd(X_2),\max(X_2)+\sd(X_2)]$, where $\sd(X_i)$ is a standard deviation of the $i$-th coordinate of $X$. In Fig.~\ref{fig:out} we present how the absolute value of the Tucker's congruence coefficient (the similarity measure of extracted factors, see Section \ref{ex}) is changing when we add the outliers. 

As we see, \ICA \ is more stable and deals better with outliers in the data, which
follows from the fact that classical ICA typically depends on the moments of order four, while our approach uses moments of order two.
\end{example}

This paper is arranged as follows. In the second section, we discuss related works. In the third, the theoretical background of our approach to ICA is presented. We introduce a cost function which uses the General Split Gaussian distribution and show that it is enough to minimize it respectively to only two parameters: vector $\m \in \R^d$  and $d \times d$ matrix  $W$. We also calculate the gradient of the cost function, which is necessary for the efficient use in the minimization procedure.
The last section describes numerical experiments. The effects of our algorithm are illustrated on simulated and real datasets.

\section{Related works}\label{RW}

\begin{figure*}[!t]
\begin{center}
\includegraphics[width=5in]{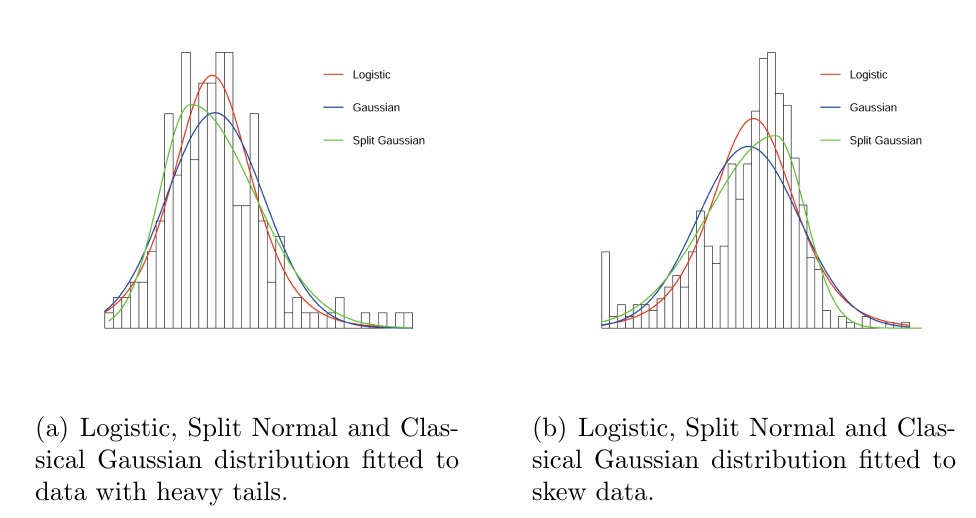}  
\end{center}
\caption{Logistic, Split Normal and Classical Gaussian distribution fitted to data with heavy tails and skew one.}
\label{fig:den_1d}
\end{figure*}

Various ICA methods were discussed in \cite{secchi2016hierarchical, hyvarinen2004independent,lee1999independent,cardoso1989source,pham1997blind,comon1994independent}. Herault and Jutten seem to be the first who introduced the ICA around 1983. They proposed an iterative real-time algorithm based on a neuro-mimetic architecture, which nevertheless, can show the lack of convergence
in a number of cases \cite{jutten1991blind}. It is worth mentioning that in their
framework, higher-order statistics were not introduced
explicitly. Giannakis et al. \cite{giannakis1989cumulant} addressed the issue of
identifiability of ICA in 1987 using third-order cumulants. However, the resulting algorithm required an exhaustive
search. 

Lacoume and Ruiz \cite{lacoume1992separation} sketched
a mathematical approach to the problem using
higher-order statistics, which can be interpreted as a measure of fitting independent components. 
Cardoso \cite{cardoso1991super,cardoso1999high} focused on the algebraic properties of the fourth-order cumulants (kurtosis) what is still a popular approach~\cite{sharma2006subspace}.
Unfortunately kurtosis has some drawbacks in practice, when its value has to be estimated from a measured sample. The main problem is that kurtosis can be very sensitive to the outliers. Its value may depend on only a few observations in the tails of the distribution. 
In high-dimensional problems, where separation process contains PCA (for dimension reduction), whitening (for scale normalization), and standard ICA
this effect is called a small sample size problem \cite{yang2005ica,deng2012small}. This is caused by the fact that for the high-dimensional data sets ICA algorithms tend to extract the independent features simply by the projections that isolate single or very few samples (outliers). To address the difficulty random pursuit and locality pursuit methods were applied
\cite{deng2012small}.

Another commonly used solution is to use skewness \cite{stone2002spatiotemporal,kollo2008multivariate,liu2011investigation,karvanen2004independent} instead of kurtosis. 
Unfortunately, skewness has received much less attention than kurtosis, and consequently methods based on skewness are usually not well theoretically justified. 

One of the most popular ICA method dedicated to the skew data is PearsonICA \cite{karvanen2000pearson,karvanen2002blind}, which minimizes mutual information using a Pearson \cite{stuart1968advanced} system-based parametric model.  The
model covers a wide class of source distributions
including skewed distributions.
The Pearson system is defined by the differential equation
$$
f'(x) = \frac{(a_1x - a_0)f(x)}{b_0 + b_1x + b_2x^2}, 
$$
where $a_0$, $a_1$, $b_0$, $b_1$ and $b_2$ are the parameters of the
distribution.
The parameters of the Pearson system can be estimated
using the method of moments.
Therefore such algorithms have strong limitations connected with the optimization procedure. The main problems are number of parameters which have to be fitted and numerical efficiency of the minimization procedure.

An important measure of fitting independent components is given by negentropy \cite{gaeta1990source}. FastICA \cite{hyvarinen1999fast}, one of the most popular implementations of ICA,  uses this approach.
Negentropy is based on the information-theoretic quantity of (differential) entropy. This concept leads to the mutual information
which is the natural information-theoretic measure of the independence of random variables. Consequently, one can use it as the criterion for finding the ICA transformation \cite{comon1994independent,bell1995information}. 
It can be shown that minimization of the mutual information is roughly equivalent to maximization of negentropy and it is easier to estimate since we do not need additional parameters. ProDenICA \cite{bach2002kernel,hastie2009elements} is based not on a
single nonlinear function, but on an entire function space of candidate nonlinearities. In particular, the method works with the functions in a reproducing kernel Hilbert space, and make use of the ``kernel trick'' to search over this space efficiently. The use of a function space makes it possible to adapt to a variety of sources and thus makes ProDenICA algorithms more robust to varying source distributions.

A somewhat similar approach to ICA is based on the maximum likelihood estimation~\cite{pham1997blind}. It is closely connected to the infomax principle since the likelihood is proportional to the negative of mutual information. 
In recent publications, the maximum likelihood estimation is one of the mot popular \cite{hyvarinen2004independent,harroy1996maximum,comon2010handbook,samworth2012independent,zarzoso2006optimal,murillo2004sinusoidal,cardoso2006maximum} approaches to ICA.  Maximum likelihood approach needs the source pdf. 
In the classical ICA it is common to use the super-Gaussian logistic density or other heavy tails distributions.

In this paper we present \ICA, a method which joins the positive aspects of
classical \ICA \ approaches with recent ones like ProDenICA or Pearson ICA. 
First of all we use a General Split Gaussian distribution, which uses second order moments to describe skewness in dataset, and therefore is relatively robust to noise or outliers. The GSG distribution can be fitted by minimizing a simple function, which depends on only two parameters $\m \in \R^d$, $W \in \M(\R^d)$, see Theorem \ref{the:min}. Moreover we calculate its gradient, and therefore we can use numerically efficient gradient type algorithms, see Theorem~\ref{ther:grad}.

%
\section{Theoretical justification} \label{se:theor}

%
%
%
%
%
%
%
%
%
%
%
%
%
%

\begin{figure*}[!t]
\normalsize
\begin{center}
\includegraphics[width=5in]{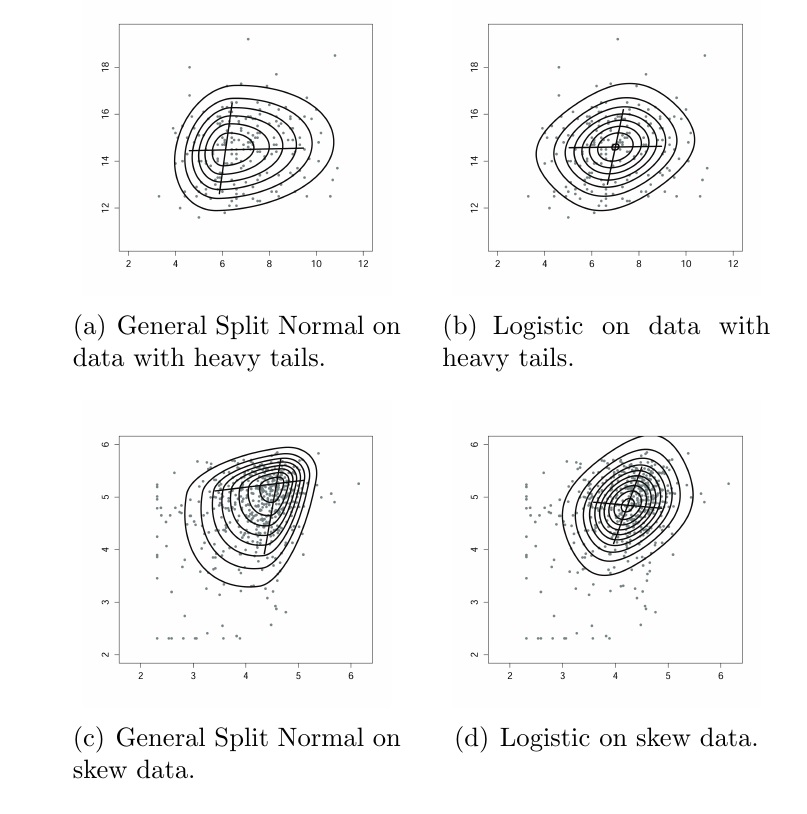}
\end{center}
\caption{Logistic and General Split Normal distributions fitted to data with heavy tails and skew ones.}
\label{fig:den_2d}
\end{figure*}

Let us describe the idea\footnote{In fact it is one of the possible approaches, as there are many explanations which lead to similar formula.} behind ICA \cite{hyvarinen2000independent}. Suppose that we have a random vector $X$
in $\R^d$ which is generated by the model with the density $F$. Then it is well-known that components of $X$ are independent iff there exist one-dimensional densities $f_1,\ldots,f_d \in \D_\R$, where by $\D_\R$ we denote the set of densities on $\R$, such that
$$
F(\x)=f_1(x_1) \cdot \ldots \cdot f_d(x_d), \for
\x=(x_1,\ldots,x_d) \in \R^d.
$$
Now suppose that the components of $X$ are not independent, but that
we know (or suspect) that there is a basis $A$ (we put $W=A^{-1}$) such that in that base the
components of $X$ become independent. This may be formulated in the form
\begin{equation} \label{eq:gen}
F(\x)=\det(W) \cdot f_1(\w_1^T(\x-\m)) \cdot \ldots \cdot f_d(\w_d^T(\x-\m)) \for x \in \R^d,
\end{equation}
where $\w_i^T(\x-\m)$ is the $i$-th coefficient of $\x-\m$ (the basis is centered in $\m$) in the basis $A$ ($\w_i$ denotes the $i$-th column of $W$).
Observe, that for a fixed family of one-dimensional densities $\F \subset \D_\R$, the set of all densities given by \eqref{eq:gen} for $f_i \in \F$, forms an affine invariant set of densities.

Thus, if we want to find such a basis that components become independent, we need to search for a matrix $W$ and one-dimensional densities such that the approximation 
$$
F(\x) \approx \det(W) \cdot f_1(\w_1^T(\x-\m)) \cdot \ldots \cdot f_d(\w_d^T(\x-\m)), \for \x \in \R^d,
$$
is optimal. However, before proceeding to practical implementations, we need to precise:
\begin{enumerate}
\item how to measure the above approximation,
\item how to deal with data $X$, since we do not have the density,
\item how to work with the family of all possible densities.
\end{enumerate}

The answer to the first point is simple and is given by the Kullback-Leibler divergence, which is defined to be the integral:
$$
D_{\mathrm{KL}}(P\|Q) = \int_{-\infty}^\infty p(x) \, \log\frac{p(x)}{q(x)} \, {\rm d}x,
$$
where $p$ and $q$ denote the densities of $P$ and $Q$. This can be written
as
$$
D_{\mathrm{KL}}(P\|Q)=h(P)-MLE(P,Q),
$$
where $h$ is the classical Shannon entropy.
Thus to minimize the Kullback-Leibler divergence, we can equivalently maximize
the MLE. This is helpful, since for a discrete data $X$ we have nice estimator of the LE (likelihood estimation):
$$
LE(X,Q)=\frac{1}{|X|} \sum_{\x \in X} \ln(q(x)). 
$$
Thus we arrive at the following problem.

\medskip

\noindent{\bf Problem [reduced]. }{\em
Let $X$ be a data set. Find an unmixing matrix $W$, center $\m$, and densities $f_1,\ldots,f_d \in \D_\R$ so that the value 
$$
\begin{array}{l}
LE(X,f_1,\ldots,f_d,\m,W)=\\[6pt]
\frac{1}{|X|} \sum \limits_{\x \in X} \ln(f_1(\w_1^T(\x-\m))  \ldots  f_d(\w_d^T(\x-\m)))+\ln(\det(W)) \!= \\[6pt]
\frac{1}{|X|}\sum \limits_{i=1}^d \sum \limits_{\x \in X} \ln(f_i(\w_i^T(\x-\m)))+\ln(\det(W)) 
\end{array}
$$
is maximized.
}

However, there is still a problem with the last point, as the search over the space of all densities $\D_\R$ is not feasible. Thus, we naturally have to reduce our search to a subclass of all densities $\F$ (which should be parametrized by a finite amount of parameters).

\medskip

\noindent{\bf Problem [final]. }{\em
Let $X \subset \R^d$ be a data set and $\F \subset \D_\R$ be a set of densities. Find an unmixing matrix $W$, center $\m$, and densities $f_1,\ldots,f_d \in \F$ so that the value 
$$
\frac{1}{|X|}\sum_{i=1}^d \sum_{\x \in X} \ln(f_i(\w_i^T(\x-\m)))+\ln(\det(W)) 
$$
is maximized.
}

It may seem that the most natural choice is Gaussian densities. However, this is not the case as Gaussian densities are affine invariant, and therefore do not ``prefer'' any fixed choice of coordinates\footnote{In fact one can observe that the choice of gaussian densities leads to PCA, if we restrict to the case of orthonormal bases}. In other words we have to choose a family of densities which is distant from Gaussian ones.

In the classical ICA approach it is common to use the super-Gaussian logistic distribution:
$$
f(x; \mu,s) = \frac{e^{\frac{x-\mu}{s}}} {s\left(1+e^{\frac{x-\mu}{s}}\right)^2} =\frac{1}{4s} \operatorname{sech}^2\!\left(\frac{x-\mu}{2s}\right).
$$
The main difference between the gaussian and super-gaussian is the existence of the heavy tails. This can be also viewed as the difference in the fourth moments.

However, such a choice leads to some negative consequences, namely the model is very sensitive to outliers. Moreover, if the data is not-symmetric, the approximation could not give the expected results, as the model consists only of
symmetric densities.

The idea behind this paper was to choose the model of densities which wouldn't
have the two above disadvantages. So, instead of choosing the family which differs from the Gaussians by the size of tail (fourth moment), we chose a family which would allow estimation of asymmetric densities -- Split Gaussian distribution \cite{gibbons1973estimation}. 

\begin{example} \label{ex:2}
In Fig. \ref{fig:den_1d} and Fig. \ref{fig:den_2d} we present a comparison between the Logistic and the Split Normal  distribution in 1d and 2d respectively. In experiments we use the classical skew dataset Lymphoma \cite{maier2007allelic,pyne2009automated} and the classical heavy tails dataset Australian athletes \cite{clauset2009power}.  In the case of heavy tails both methods work nice, since real dataset represent heavy tails which 
are not symmetric and the skew model is able to detect it. On the other hand, in the case of skew data Split Normal gives essentially better results.       
\end{example}   

%
\section{Split Gaussian distribution}\label{SGD}

\begin{figure*}[!t]
\normalsize
\begin{center}
\includegraphics[width=5in]{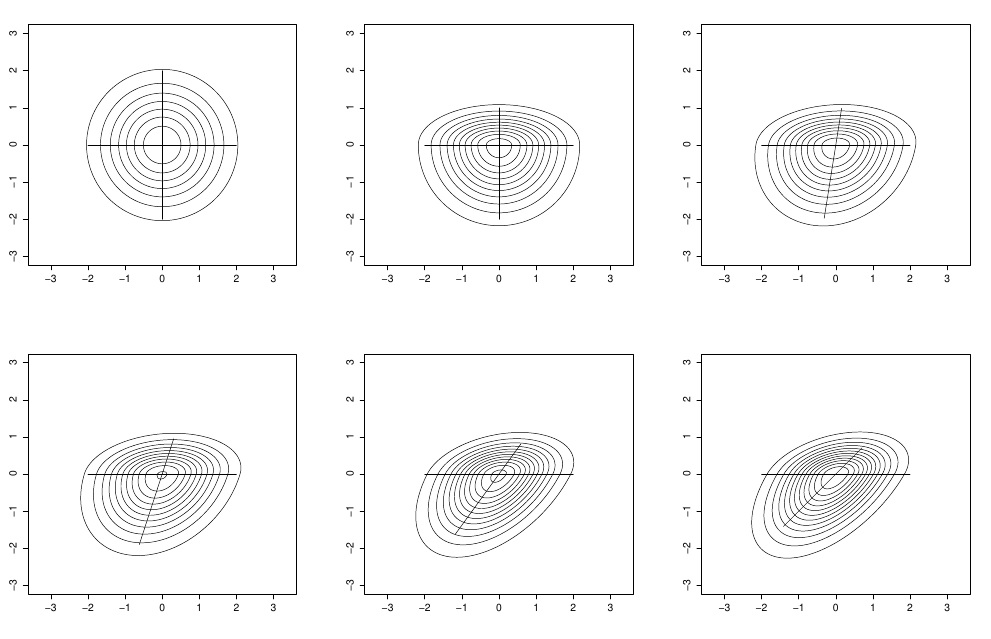}
\end{center}
\caption{Level sets of the General Split Normal distribution with different parameters.}
\label{fig:ex_level_s}
\end{figure*}

In this section we present our density model.
A natural direction for extending the normal distribution is the introduction of some skewness, and several proposals have indeed emerged, both in the univariate and multivariate case, see \cite{azzalini1985class,azzalini1996multivariate,villani2006multivariate}.
One of the most popular approaches is the Split Normal (SN) distribution, or the Split Gaussian (SG) distribution \cite{gibbons1973estimation}. In our paper we use a generalization of this model, which we call the General Split Normal (GSN) distribution. 

  We start from the one-dimensional case. After that we present a possible generalization of this definition to the multidimensional setting, which corresponds with the formula (\ref{eq:gen}). Contrary to the Split Gaussian distribution, we  skip the assumption of the orthogonality of coordinates (often called principal components), and obtain an ICA model.

\subsection{One-dimensional case}

The density of the one-dimensional Split Gaussian distribution is given by the formula
$$
SN(x;m,\sigma^2,\tau^2) = \left\{ \begin{array}{ll}
c \cdot \exp[-\frac{1}{2\sigma^2}(x-m)^2], & \textrm{for $x\leq m$},\\
c \cdot \exp[-\frac{1}{2\tau^2\sigma^2}(x-m)^2], & \textrm{for $x>m$},\\
\end{array} \right.
$$
where $c=\sqrt{\frac{2}{\pi}}\sigma^{-1}(1+\tau)^{-1}$.

As we see the split normal distribution arises from merging two opposite halves of two probability density functions of normal distributions in their common mode.
In general the use of the Split Gaussian distribution (even in 1D) allows to fit data with better precision (from the likelihood function point of view). In 1982 John \cite{john1982three} showed that the likelihood function can be expressed in an intensive form, in which the scale parameters $\sigma$ and $\tau$ are a function of the location parameter $m$ (see Theorem 3.1 proved by \cite{villani2006multivariate}). 
Thanks to this theorem we can maximize the likelihood function numerically with respect to a single parameter $m$ only. The rest of parameters are explicitly given by simple formulas.

\subsection{Multidimensional Split Gaussian distribution }
A natural generalization of the univariate split normal distribution to the multivariate settings was presented by \cite{villani2006multivariate}.
Roughly speaking, authors assume that a vector $\x \in \R^d$ follows the multivariate Split Normal distribution, if its principal components are orthogonal and follow the one-dimensional Split Normal distribution.

\begin{definition}[Definition 2.2. \cite{villani2006multivariate}]\label{def:SN}
A density of the multivariate Split Normal distribution is given by
$$
 SN_{d}(\x; \m, \Sigma,\tau)= \prod_{j=1}^{d} SN(\w_j^T(\x-\m);0,\sigma_j^2,\tau_j^2),
$$
where $\w_{j}$ is the eigenvector corresponding to the $j$-th largest eigenvalue in the spectral decomposition of $\Sigma = W \A W^{T}$ and $\m = [m_1, \ldots, m_d]^T$, $\A = \diag(\sigma_{1}^2,\ldots,\sigma_{d}^2)$ and $\tau=[\tau_{1}^2,\ldots,\tau_{d}^2]$.
\end{definition}

One can easily observe that the principal components $\w_j^T\x$ are independent.

For this generalization a similar theorem, like in the one-dimensional case, is valid. We can extract the maximum likelihood estimation by maximizing the function with respect to two parameters $\m \in \R^d$ and $W \in \M_{d}(\R)$ where columns of $W$ are orthonormal vectors ($\M_{d}(\R)$ denotes the set of $d$-dimensional square matrices). 

We may use this theorem for numerical maximization of the likelihood function w.r.t. $\m$ and $W$. Unfortunately, the optimization process on Stiefel manifold (the set of orthogonal matrices) studied by \cite{absil2009optimization}  is numerically ineffective and requires additional tools. This problem can be omitted by using Eulerian angles described by \cite{khatri1977mises}. In the two-dimensional case, $W$ is explicitly parametrized as
$$
W=
\begin{bmatrix}
\cos(\theta) & \sin(\theta) \\
-\sin(\theta) & \cos(\theta) 
\end{bmatrix}, \quad -\frac{\pi}{2} < \theta \leq \frac{\pi}{2}.
$$ 
In such a case we can straightforwardly apply standard numerical optimization algorithm.

Both of these solutions can be applied. Nevertheless, unnatural assumption of the orthogonality of principal components causes two negative effects: the optimization process is time consuming and the model with the restriction that the coordinates are orthogonal can not accommodate data as good as the general one. 
Therefore, in this article we use more flexible model -- the General Split Normal \cite{spurek2017general} distribution:

\begin{definition}\label{def:GSN}
A density of the multivariate General Split Normal distribution is given by
$$
 GSN_{d}(\x; \m,W, \sigma^2,\tau^2)=\det(W) \prod_{j=1}^{d} SN(\w_j^T(\x-\m);0,\sigma_j^2,\tau_j^2),
$$
where $\w_{j}$ is the $j$-th column of non-singular matrix $W$, $\m = (m_1, \ldots, m_d)^T$, $\sigma = (\sigma_{1},\ldots,\sigma_{d})$ and $\tau=(\tau_{1},\ldots,\tau_{d})$.
\end{definition}



Our model is a natural generalization of the multivariate Split Normal distribution proposed in \cite{villani2006multivariate} (see Definition \ref{def:SN}) and is given in the form formulated by \eqref{eq:gen}
for the set of Split Gaussian densities. 
Clearly every Split Normal distribution is a General 
Split Normal distribution.



The above generalization is flexible and allows to fit data with greater precision, see Fig. \ref{fig:rev_1}. The level sets of the GSN distribution with different parameters are presented in Fig.~\ref{fig:ex_level_s}.  
We skip the constraints of orthogonality of the principal components. Consequently, we can apply the standard optimization procedure directly. In the next section we discuss how to fit data in our model.

\begin{figure}[!t]
\normalsize
\begin{center}
\includegraphics[width=5in]{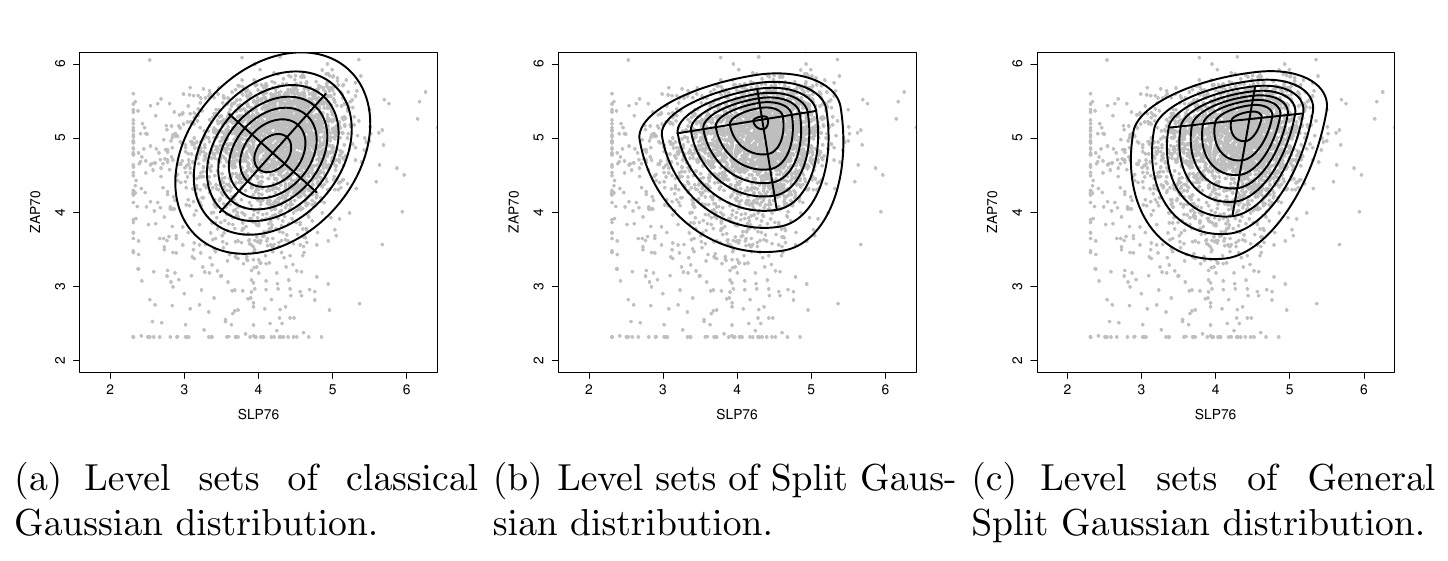}
\end{center}
\caption{Comparison between fitting Gaussian, Split Gaussian and General Split  distribution on \citep{maier2007allelic,pyne2009automated}. Observe that, contrary to Split Gaussian, General Split Gaussian does not have orthogonal basis.}
\label{fig:rev_1}
\end{figure}

\section{Maximum likelihood estimation}

In the previous section we introduced the GSN distribution. 
Now we show how to use the likelihood estimation in our setting. As it was mentioned, we have to maximize the likelihood function with respect to four parameters. In the case of the General Split Normal distribution (contrary to the classical Gaussian one) we do not have explicit formulas and consequently we heave to solve the optimization problem.

In the first subsection, we reduce our problem to the simpler one by introducing the function~${l}$. Minimization of~${l}$~is equivalent to maximization of the likelihood function.
In the second subsection we present how to minimize our function by using the gradient method.

\subsection{Optimization problem}

The density of the GSN distribution depends on four parameters $\m \in \R^d$, $W \in \M(\R^d)$, $\sigma \in \R^d$, $\tau \in \R^d$. 
We can find them by minimizing the simpler function, which depends on only  $m \in \R^d$ and $W \in \M(\R^d)$. Other parameters are given by explicit formulas.    

\begin{theorem}\label{the:min}
Let $\x_1,\ldots,\x_n$ be given.  
Then the likelihood maximized w.r.t. $\sigma$ and $\tau$ is
\begin{equation}\label{eq:1}
 \hat{L}(X;\m,W) =   \bigg( \frac{2n}{\pi e} \bigg)^{dn/2} \bigg( \frac{1}{|\det(W)|^{\frac{2}{3}}} \prod_{j=1}^{d} g_{j}(\m,W) \bigg)^{-3n/2},
\end{equation}
where
$$
\begin{array}{c}
{g}_{j}(\m,W) = {s}_{1j}^{1/3} + {s}_{2j}^{1/3},
\\[1ex]
{s}_{1j}= \! \sum\limits_{i \in I_j}[ \w_{j}^T (\x_i-\m)]^2,  {I}_j=\{ i = 1,\ldots,n \colon \w_{j}^T (\x_i-\m) \leq 0 \},
\\[1ex]
{s}_{2j}= \! \sum\limits_{i \in I_j^c}[ \w_{j}^T (\x_i-\m)]^2, {I}_j^c=\{ i = 1,\ldots,n \colon  \w_{j}^T (\x_i-\m) > 0 \},
\end{array}
$$
and the maximum likelihood estimators of $\sigma_{j}^2$ and $\tau_{j}$ are
$$\hat \sigma_j^2(\m,W) = \tfrac{1}{n} s_{1j}^{2/3} g_{j}(\m,W), \quad
\hat \tau_{j}(\m,W)=\left(\frac{s_{2j}}{s_{1j}}\right)^{1/3}.
$$
\end{theorem}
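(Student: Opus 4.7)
The plan is to write out the log-likelihood of the GSN density at $n$ sample points, split each one-dimensional factor along the sign of $w_j^T(x_i-m)$ so the piecewise definition of $SN$ becomes a clean sum of two Gaussian quadratics, optimize coordinate by coordinate in $(\sigma_j,\tau_j)$, and substitute the optimizers back to get the stated $\hat L$.

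Concretely, starting from Definition~\ref{def:GSN} and the one-dimensional split-normal formula, the log-likelihood decouples across $j$:
\begin{equation*}
\ln L = n\ln|\det(W)| + \sum_{j=1}^{d} \ell_j(\sigma_j,\tau_j),
\end{equation*}
where, using $s_{1j}$, $s_{2j}$ as defined in the statement and the normalizing constant $c_j=\sqrt{2/\pi}\,\sigma_j^{-1}(1+\tau_j)^{-1}$,
\begin{equation*}
\ell_j(\sigma_j,\tau_j) = \tfrac{n}{2}\ln\tfrac{2}{\pi} - n\ln\sigma_j - n\ln(1+\tau_j) - \tfrac{s_{1j}}{2\sigma_j^{2}} - \tfrac{s_{2j}}{2\tau_j^{2}\sigma_j^{2}}.
\end{equation*}
Because $\m$ and $W$ determine $s_{1j}$ and $s_{2j}$ but not $(\sigma_j,\tau_j)$, the profile likelihood in $(\sigma,\tau)$ is obtained by maximizing each $\ell_j$ separately.

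Next I set the partial derivatives to zero. The stationarity condition $\partial\ell_j/\partial\sigma_j=0$ yields $n\sigma_j^{2}=s_{1j}+s_{2j}/\tau_j^{2}$, and $\partial\ell_j/\partial\tau_j=0$ yields $n\tau_j^{3}\sigma_j^{2}=s_{2j}(1+\tau_j)$. The key algebraic step is to eliminate $\sigma_j^{2}$: solving the second equation for $s_{2j}$ and plugging into the first collapses to $n\sigma_j^{2}=s_{1j}(1+\tau_j)$, so that $s_{2j}=\tau_j^{3}s_{1j}$ and hence
\begin{equation*}
\hat\tau_j = (s_{2j}/s_{1j})^{1/3}, \qquad \hat\sigma_j^{2} = \tfrac{s_{1j}(1+\hat\tau_j)}{n} = \tfrac{1}{n}s_{1j}^{2/3}\bigl(s_{1j}^{1/3}+s_{2j}^{1/3}\bigr) = \tfrac{1}{n}s_{1j}^{2/3}\,g_j(\m,W),
\end{equation*}
matching the stated estimators. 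A brief verification that this critical point is a maximum (the Hessian is negative definite on $(\sigma_j,\tau_j)\in(0,\infty)^{2}$, and the likelihood tends to zero on the boundary) makes the identification rigorous.

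The remaining step is substitution. Using the two stationarity equations, the two quadratic terms each evaluate to $n/(2(1+\hat\tau_j))$ and $n\hat\tau_j/(2(1+\hat\tau_j))$, whose sum is exactly $n/2$, independent of $j$. The combination $\hat\sigma_j(1+\hat\tau_j)$ simplifies to $g_j^{3/2}/\sqrt{n}$, so the normalization contribution becomes $-n\ln[\hat\sigma_j(1+\hat\tau_j)] = \tfrac{n}{2}\ln n - \tfrac{3n}{2}\ln g_j$. Summing these over $j$, adding the constant $\tfrac{dn}{2}\ln(2/\pi)-\tfrac{dn}{2}$, and reinstating $n\ln|\det(W)|$ gives
\begin{equation*}
\ln\hat L = \tfrac{dn}{2}\ln\tfrac{2n}{\pi e} + n\ln|\det(W)| - \tfrac{3n}{2}\sum_{j=1}^{d}\ln g_j(\m,W),
\end{equation*}
which exponentiates to the asserted formula~\eqref{eq:1} after rewriting $n\ln|\det(W)|$ as $\tfrac{3n}{2}\cdot\tfrac{2}{3}\ln|\det(W)|$.

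The only nontrivial part is the algebraic elimination between the two stationarity equations that yields $\tau_j^{3}=s_{2j}/s_{1j}$ and the subsequent collapse of the quadratic part of the log-likelihood to a $j$-independent constant; everything else is bookkeeping. A minor caveat worth recording is the degenerate case $s_{1j}=0$ or $s_{2j}=0$ (all projections on one side of $m$), where the optimum is attained on the boundary $\tau_j\in\{0,\infty\}$; this can be handled by noting that $g_j$ and the estimators extend continuously and that a generic $(\m,W)$ avoids the degeneracy.
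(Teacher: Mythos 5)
Your proof is correct and follows essentially the same route as the paper's Appendix A: write the log-likelihood, set the partial derivatives in $\sigma_j$ and $\tau_j$ to zero, solve the resulting system for the stated estimators, and substitute back. You actually fill in details the paper elides under ``by simple calculations'' (the elimination giving $\tau_j^3 = s_{2j}/s_{1j}$, the second-order/boundary check, and the degenerate case $s_{1j}=0$ or $s_{2j}=0$), but the argument is the same one.
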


\begin{proof}
See Appendix \ref{App:A}.
\end{proof}

\begin{figure*}[t!]
\normalsize
\begin{center}
\includegraphics[width=5in]{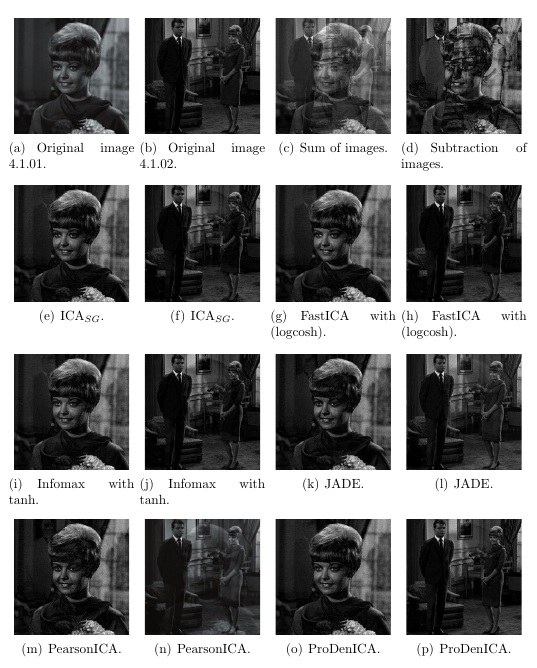}
\end{center}
\caption{Results of image separation with the uses of various ICA algorithms.}
\label{fig:image_ICA_1}
\end{figure*}

Thanks to the above theorem, instead of looking for the maximum of the likelihood function, it is enough to obtain the maximum of the simpler function~(\ref{eq:1}) which depends on two parameters $\m \in \R^d$ and $W \in \M(\R^d)$
\begin{equation}\label{equ:ll}
{l}(X;\m,W) = \frac{1}{|\det(W)|^{\frac{2}{3}}} \prod_{j=1}^{d} {g}_{j}(\m,W),
\end{equation}
where $\w_{j}$ stands for the $j$-th column of matrix $W$. 
Consequently, maximization of (\ref{eq:1}) is equivalent to minimization of  (\ref{equ:ll}), see the following corollary.

\begin{corollary}\label{c2}
Let $X \subset \R^d$, $\m \in \R^d$, $W \in \M(\R^d)$ be given, then
$$
 \argmax_{\m,W} \hat{L}(X;\m,W) =  \argmin_{\m,W} {l}(X;\m,W).
$$
\end{corollary}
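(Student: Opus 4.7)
The plan is to recognize that the corollary is an immediate algebraic consequence of the explicit form of $\hat L$ given in Theorem~\ref{the:min}, together with the monotonicity of a certain power function. First, I would factor out from $\hat L$ the positive constant $C = (2n/(\pi e))^{dn/2}$, which does not depend on $(\m,W)$, and rewrite the formula in~\eqref{eq:1} as
$$
\hat{L}(X;\m,W) = C \cdot l(X;\m,W)^{-3n/2},
$$
where $l$ is exactly the quantity defined in~\eqref{equ:ll}. This reduction is the whole content of the argument: any strictly decreasing transformation of a positive function exchanges its argmax with its argmin.

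Next I would justify that the transformation $t \mapsto C \cdot t^{-3n/2}$ really is strictly decreasing on the image of $l$. For this I need $l(X;\m,W) > 0$ on the parameter region being optimized over. Positivity of $l$ requires two things: $|\det(W)| > 0$ (automatic since $W \in \M(\R^d)$ is the unmixing matrix, required to be non-singular for the change of basis in~\eqref{eq:gen} to make sense), and $g_j(\m,W) > 0$ for each $j$. The latter holds as long as the projected residuals $\w_j^T(\x_i - \m)$ are not identically zero across $i$, which is a mild genericity condition on the data; at any point where some $g_j$ vanishes, $\hat L$ blows up to $+\infty$ which corresponds to a degenerate fit rather than an interior maximum, so it may be excluded from the comparison.

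With $l > 0$ on the admissible set, the function $t \mapsto C t^{-3n/2}$ is smooth and strictly decreasing on $(0,\infty)$. Hence for any two admissible pairs $(\m_1,W_1)$ and $(\m_2,W_2)$, $\hat L(X;\m_1,W_1) \geq \hat L(X;\m_2,W_2)$ if and only if $l(X;\m_1,W_1) \leq l(X;\m_2,W_2)$, which yields the equality of argmax and argmin sets. There is no real obstacle here: the proof is a one-line composition-with-monotone-map argument, and the only mild care needed is to exclude the degenerate locus where $l$ vanishes (equivalently $\hat L$ is unbounded), which is outside the region where the MLE is defined anyway.
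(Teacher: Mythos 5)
Your proposal is correct and matches the paper's (implicit) argument exactly: the paper offers no separate proof of Corollary~\ref{c2}, treating it as an immediate consequence of the identity $\hat{L}(X;\m,W) = \big(\tfrac{2n}{\pi e}\big)^{dn/2}\, l(X;\m,W)^{-3n/2}$ from Theorem~\ref{the:min} and the strict monotonicity of $t \mapsto t^{-3n/2}$ on $(0,\infty)$. Your additional care about positivity of $l$ (non-singular $W$ and non-vanishing $g_j$) is a reasonable refinement of the same one-line argument.
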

%

\subsection{Gradient}



One of the possible methods of optimization is the gradient method. Since the minimum of ${l}$ is equal to the minimum of $\ln({l})$, in this subsection we calculate the gradient of $\ln({l})$. 
Before we prove suitable Theorem \ref{ther:grad}, we recall the following lemma. 

\begin{lemma}\label{jacobi}
Let $A = (a_{ij})_{1 \leq i,j \leq d}$ be a differentiable map from real numbers to $d \times d$ matrices then
\begin{equation}
\frac{\partial \det(A)}{\partial a_{ij}} = \mathrm{adj}^T(A)_{ij},
\end{equation}
where $\mathrm{adj}(A)$ stands for the adjugate of $A$, i.e. the transpose of the cofactor matrix.
\end{lemma}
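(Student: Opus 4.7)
The plan is to reduce the statement to a direct application of the Laplace (cofactor) expansion of the determinant. Fix indices $i,j$. Expanding $\det(A)$ along the $i$-th row yields
$$
\det(A) = \sum_{k=1}^{d} a_{ik}\, C_{ik},
$$
where $C_{ik} = (-1)^{i+k} M_{ik}$ is the $(i,k)$ cofactor and $M_{ik}$ is the minor obtained by deleting the $i$-th row and the $k$-th column of $A$.

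The key observation is that for every $k \in \{1,\dots,d\}$ the cofactor $C_{ik}$ depends only on entries of $A$ lying outside row $i$, hence it is independent of $a_{ij}$. Consequently, when we differentiate the expansion term by term with respect to $a_{ij}$, the only surviving contribution comes from the factor $a_{ij}$ in the $k=j$ summand, and I obtain
$$
\frac{\partial \det(A)}{\partial a_{ij}} \;=\; C_{ij}.
$$

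To finish, I just need to match this with the claimed expression $\mathrm{adj}^T(A)_{ij}$. By definition, $\mathrm{adj}(A)$ is the transpose of the cofactor matrix $(C_{ij})$, so $\mathrm{adj}(A)_{ji} = C_{ij}$, and therefore $\mathrm{adj}^T(A)_{ij} = \mathrm{adj}(A)_{ji} = C_{ij}$, which matches the computed derivative.

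There is essentially no obstacle here: the whole argument is a one-line consequence of Laplace expansion once one is careful to expand along the row (or column) containing the entry of differentiation, so that the cofactors themselves are constant in that variable. The only point worth flagging in the writeup is a clear statement of the sign/transposition convention relating the cofactor matrix to the adjugate, since different textbooks define $\mathrm{adj}$ with or without the transpose; I would state the convention explicitly at the start and then the index bookkeeping $\mathrm{adj}^T(A)_{ij} = C_{ij}$ is immediate.
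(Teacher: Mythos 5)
Your proof is correct and uses exactly the same approach as the paper: Laplace expansion of $\det(A)$ along the $i$-th row, followed by term-by-term differentiation. You are in fact slightly more careful than the paper's own one-line argument, since you explicitly justify why the cofactors $C_{ik}$ are independent of $a_{ij}$ and spell out the transposition convention for the adjugate.
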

\begin{proof}
By the Laplace expansion $\det A = \sum\limits_{j=1}^{d} (-1)^{i+j} a_{ij} M_{ij}$ where $M_{ij}$ is the minor of the entry in the $i$-th row and $j$-th column. Hence
$$\frac{\partial \det A}{\partial a_{ij}} = (-1)^{i+j} M_{ij} = \mathrm{adj}^T(A)_{ij}.$$
\end{proof}
Now we are ready to calculate gradient of our cost function.

\begin{theorem}\label{ther:grad}
Let $X \subset \R^d$, $\m = (\m_1, \ldots, \m_d)^T \in \R^d$, $W = (\w_{ij})_{1 \leq i,j \leq d}$ non-singular be given. 
Then
$\nabla_{\m}  \ln {l}(X;\m,W) = \left(  \frac{\partial \ln {l}(X;\m,W)}{\partial \m_1}, \ldots, \frac{\partial \ln {l}(X;\m,W)}{\partial \m_d} \right)^T$,
where
$$
\begin{array}{l}
\frac{\partial \ln {l}(X;\m,W)}{\partial \m_k} =
\sum \limits_{j=1}^d \frac{-1}{{s}_{1j}^{\frac{1}{3}} + {s}_{2j}^{\frac{1}{3}}} \bigg(
\frac{1}{3 {s}_{1j}^{\frac{2}{3}}} \sum \limits_{i \in I_j} 2 \w_j^T (\x_i - \m)  \w_{jk} + 
\frac{1}{3 {s}_{2j}^{\frac{2}{3}}} \sum \limits_{i \in I_j^c} 2 \w_j^T (\x_i - \m)  \w_{jk}
\bigg).
\end{array}
$$
Moreover,
$
\nabla_{W} \ln {l}(X;\m,W) = \left[ \frac{\partial \ln l(X;\m,W)}{\partial \w_{pk}}  \right]_{1 \leq p,k \leq d},
$
where
$$
\begin{array}{l}
\frac{\partial \ln l(X;\m,W)}{\partial \w_{pk}}  = 
-\frac{2}{3}  (\w^{-1})^T_{pk} +
\frac{1}{{s}_{1p}^{\frac{1}{3}} +{s}_{2p}^{\frac{1}{3}}} 
\bigg(
\frac{1}{3} {s}_{1p}^{-\frac{2}{3}}  \sum \limits_{i \in {I}_p} 2 \w^T_p  (\x_i - \m) (\x_{ik} - \m_k) + \\[6pt]
+ \frac{1}{3} {s}_{2p}^{-\frac{2}{3}}  \sum \limits_{i \in {I}_p^c} 2 \w^T_p  (\x_i - \m)  (\x_{ik} - \m_k) \bigg),
\end{array}
$$
and
$$
\begin{array}{c}
{s}_{1j}= \! \sum\limits_{i \in I_j}[ \w_{j}^T (\x_i-\m)]^2, {I}_j=\{ i = 1,\ldots,n \colon \w_{j}^T (\x_i-\m) \leq 0 \},
\\[1ex]
{s}_{2j}= \! \sum\limits_{i \in I_j^c}[ \w_{j}^T (\x_i-\m)]^2,  {I}_j^c=\{ i = 1,\ldots,n \colon  \w_{j}^T (\x_i-\m) > 0 \}.
\end{array}
$$
\end{theorem}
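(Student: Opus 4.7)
The plan is to decompose
\[
\ln l(X;\m,W) \;=\; -\tfrac{2}{3}\ln|\det(W)| \;+\; \sum_{j=1}^{d}\ln g_j(\m,W),\qquad g_j=s_{1j}^{1/3}+s_{2j}^{1/3},
\]
and treat the two summands separately: the log-determinant depends only on $W$, while the $g_j$'s depend on $(\m,W)$ only through the linear forms $\w_j^T(\x_i-\m)$. Computing $\nabla_\m$ and $\nabla_W$ then reduces to the chain rule, provided one first justifies that the ingredients are actually differentiable.

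The only subtle step is differentiability of $s_{1j}$ and $s_{2j}$, since the index sets $I_j,I_j^c$ themselves depend on $(\m,W)$. I would deal with this by rewriting each summand of $s_{1j}$ as $\phi\bigl(\w_j^T(\x_i-\m)\bigr)$ with $\phi(t)=t^{2}\1_{\{t\le 0\}}$, and noting that $\phi\in C^{1}(\R)$ with $\phi'(t)=2t\,\1_{\{t\le 0\}}$: the ``boundary'' contribution one might fear cancels because $\phi(0)=\phi'(0)=0$. Thus $s_{1j},s_{2j}$ are jointly $C^{1}$ in $(\m,W)$, and under the generic hypothesis $s_{1j},s_{2j}>0$ (so that $s\mapsto s^{1/3}$ is smooth) the composite $\ln g_j$ is $C^{1}$. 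The punch line of this step is that one may differentiate $s_{1j}=\sum_{i\in I_j}[\w_j^T(\x_i-\m)]^{2}$ term by term with the index set \emph{frozen} at its current value.

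Having secured differentiability, the $\m$-gradient is immediate: the log-determinant drops out, and the chain rule yields
\[
\frac{\partial \ln g_j}{\partial \m_k}
= \frac{1}{g_j}\Bigl(\tfrac{1}{3}s_{1j}^{-2/3}\,\tfrac{\partial s_{1j}}{\partial \m_k}+\tfrac{1}{3}s_{2j}^{-2/3}\,\tfrac{\partial s_{2j}}{\partial \m_k}\Bigr).
\]
Using $\partial \w_j^T(\x_i-\m)/\partial \m_k=-\w_{jk}$ (with $\w_{jk}$ understood as the $k$-th coordinate of the column $\w_j$) I obtain $\partial s_{1j}/\partial \m_k = -2\w_{jk}\sum_{i\in I_j}\w_j^T(\x_i-\m)$, and analogously for $s_{2j}$; summing over $j$ reproduces exactly the stated formula, the overall minus sign combining into the $-1/(s_{1j}^{1/3}+s_{2j}^{1/3})$ prefactor displayed in the theorem.

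The $W$-gradient proceeds the same way. The log-determinant part uses Lemma~\ref{jacobi} together with the identity $\mathrm{adj}(W)=\det(W)\,W^{-1}$, which yields $\partial\ln|\det(W)|/\partial \w_{pk}=(W^{-1})^{T}_{pk}$; multiplied by $-\tfrac{2}{3}$ this explains the leading term. For the $\sum_j \ln g_j$ part, only $g_p$ depends on the $p$-th column $\w_p$, so only the $j=p$ summand survives when differentiating with respect to $\w_{pk}$. Differentiating $\w_p^T(\x_i-\m)$ with respect to the relevant entry of $\w_p$ gives $\x_{ik}-\m_k$, hence $\partial s_{1p}/\partial\w_{pk}=2\sum_{i\in I_p}\w_p^T(\x_i-\m)(\x_{ik}-\m_k)$ and similarly for $s_{2p}$, which pushed through the chain rule reproduces the theorem's second formula. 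Throughout, the main obstacle is really the differentiability argument of the second paragraph; once $\phi(t)=t^{2}\1_{\{t\le 0\}}$ is recognised as $C^{1}$, everything else is bookkeeping with the chain rule and Lemma~\ref{jacobi}.
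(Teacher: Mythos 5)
Your proposal follows essentially the same route as the paper's proof: decompose $\ln l$ into $-\tfrac{2}{3}\ln|\det(W)|$ plus $\sum_j \ln g_j$, differentiate $s_{1j},s_{2j}$ term by term via the chain rule, and handle the determinant with Lemma~\ref{jacobi} and $\mathrm{adj}(W)=\det(W)W^{-1}$; all the resulting formulas match. The only difference is that you explicitly justify freezing the index sets $I_j,I_j^c$ by observing that $t\mapsto t^2\1_{\{t\le 0\}}$ is $C^1$ --- a point the paper's proof passes over in silence --- which is a welcome addition rather than a deviation.
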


\begin{proof}
See Appendix \ref{App:B}.
\end{proof}

Thanks to the above theorem we can use gradient descent, a first-order optimization algorithm. To find a local minimum of the cost function $\ln(l)$ using gradient descent, one takes steps proportional to the negative of the gradient of the function at the current point. If instead one takes steps proportional to the positive of the gradient, one approaches a local maximum of that function, see Algorithm \ref{alg1}.

\begin{algorithm}[!h] 
\caption{:} 
\label{alg1} 
\begin{algorithmic} 
\STATE {\bf Input}
\STATE\hspace\algorithmicindent data set $X$
\STATE {\bf Initial conditions}
\STATE\hspace\algorithmicindent initialization of mean vector $\m=\mean(X)$
\STATE\hspace\algorithmicindent initialization of matrix $W = \cov(X)$ 
\STATE {\bf Gradient algorithm}
\STATE\hspace\algorithmicindent obtain new values of $\m$ and $V$ by applying gradient method for function $\log(l)$ (see formula \ref{eq:1}):
\STATE\hspace\algorithmicindent $$(\m,W) =\argmin\limits_{\bar \m,\bar W} \log({l}(X;\bar \m,\bar W)), $$ 
\STATE\hspace\algorithmicindent where 
\STATE\hspace\algorithmicindent $$\nabla_{\m}  \ln {l}(X;\m,W)$$ $$\nabla_{W} \ln {l}(X;\m,W)$$ 
\STATE\hspace\algorithmicindent are given by Theorem \ref{ther:grad}
\STATE\hspace\algorithmicindent calculate $\sigma \in \R^d$ and $\tau \in \R^d$ by using Theorem \ref{the:min}

\STATE {\bf Return value}
\STATE\hspace\algorithmicindent return optimal ICA basis $(\m,W) $.
\end{algorithmic}
\end{algorithm}

At the end of this section we present comparison of computational efficiency
between \ICA \ and various ICA methods, see Fig. \ref{fig:time_1}. In our experiment we consider the classical image separation problem, where we mixed two images by adding and subtracting them. We use ten pairs of images. Each pair was scaled to different sizes. In Fig. \ref{fig:time_1} we present mean value of computation time. FastICA, Infomax and JADE are the most effective but do not solve the problem of image separation sufficiently well, see Tab. \ref{tab:congru_img_1}. On the other hand, the ProDenICA which gives comparable result to \ICA, is much slower. 

\begin{figure*}[!t]
\normalsize
\begin{center}
\includegraphics[width=5in]{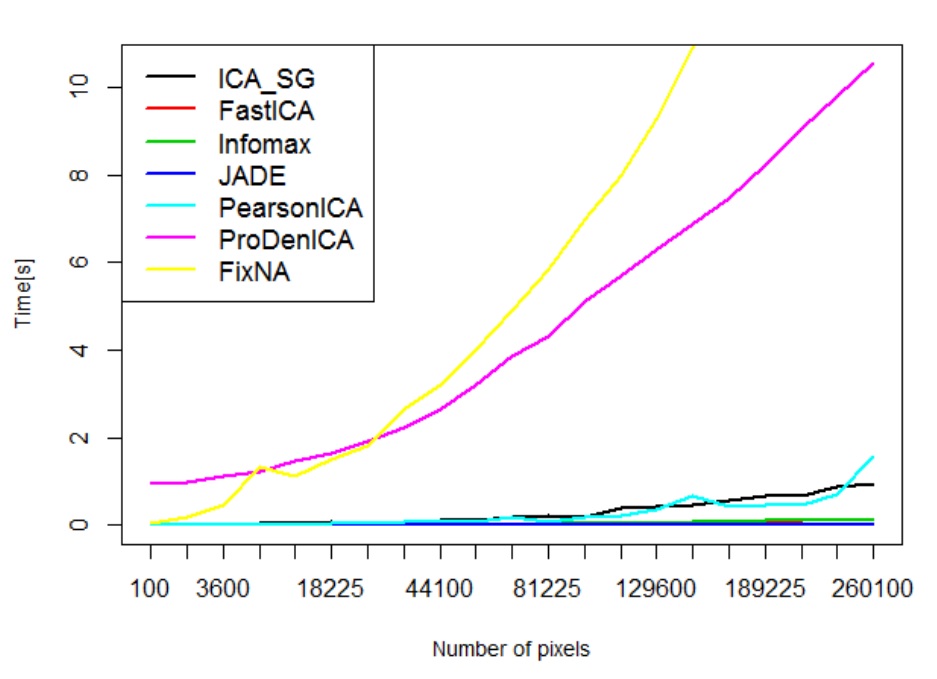}
\end{center}
\caption{Comparison of computational efficiency between \ICA \ and various ICA methods.}
\label{fig:time_1}
\end{figure*}

\section{Experiments and analysis}\label{ex}

To compare our method to classical ones we use 
Tucker's congruence coefficient \cite{lorenzo2006tucker}
(uncentered correlation) defined by
$$
Cr(\s, \bar \s) = \frac{ \sum_{i=1}^d s_i \bar \s_i}{ \sqrt{\sum_{i=1}^d \s_i^2}\sqrt{ \sum_{i=1}^d  {\bar \s}^2_i } }.
$$
Its values range between $-1$ and $+1$. It can be used to study the similarity of extracted factors across different samples. Generally, a congruence coefficient of $0.9$ indicates a high degree of factor similarity, while a coefficient of $0.95$ or higher indicates that the factors are virtually identical. 
In the case of ICA methods multiplying by the scalar any of the sources do not change results. Therefore the sign of congruence coefficient is not important and we can compare absolute value of Tucker's congruence.

%


We evaluate our method in the context of images, sound, hyperspectral unmixing and EEG data.  
For comparison we use R package {\tt ica} \cite{ica}, {\tt PearsonICA} \cite{pearsonica}, {\tt ProDenICA} \cite{prodenica}, {\tt tsBSS} \cite{tsBSS}.
The most popular method used in practice is FastICA \cite{hyvarinen1999fast,helwig2013critique} algorithm, which uses negentropy. In this context we can use three different functions to estimate neg-entropy:
logcosh, exp and kurtosis.
We also compare our method with algorithm using Information-Maximization (Infomax) approach \cite{bell1995information}. Similarly to FastICA we consider three possible nonlinear functions: hyperbolic tangent, logistic and extended Infomax.
We also consider algorithm which uses Joint Approximate Diagonalization of Eigenmatrices (JADE) proposed by Cardoso and Souloumiac's \cite{cardoso1993blind,cardoso1993blind,helwig2013critique}.

One of the most popular ICA methods dedicated for skew data is PearsonICA \cite{karvanen2000pearson,karvanen2002blind}, which minimizes mutual information using a Pearson \cite{stuart1968advanced} system-based parametric model. Another model we consider is ProDenICA \cite{bach2002kernel,hastie2009elements}, which is based not on a
single nonlinear function, but on an entire function space of candidate nonlinearities. In particular, the method works with the functions in a reproducing kernel Hilbert space, and make use of the “kernel trick” to search over this space efficiently. 
We also compare our method with  FixNA \cite{shi2009blind}, method for blind source separation problem.

%
\subsection{Separation of images}

One of the most popular application of ICA is the separation of images. In our experiments we use four images from the USC-SIPI Image Database of size $256 \times 256$ pixels (4.1.01, 4.1.06, 4.1.02, 4.1.03) and eight of size $512 \times 512$ pixels (4.2.04, 4.2.02, boat.512, elaine.512, 5.2.10, 5.2.08, 5.3.01, 4.2.03). We also use 8 images from the Berkeley Segmentation Dataset of size $482 \times 321$ with indexes (\#119082, \#42049, \#43074, \#38092, \#157055, \#220075, \#295087, \#167062). We make random pairs of above images and use them as a source signal, combined by the mixing matrix $A = \begin{bmatrix} 1 & 1  \\ 1 & -1  \end{bmatrix} $. From practical point of view, we simply obtain two new images by adding and dividing sources pictures. Our goal is to reconstruct original images by using only the knowledge about mixed ones. The visualization of this process we present in Fig. \ref{fig:image_ICA_1}. The results of this experiment are presented in Tab.~\ref{tab:congru_img_1} where we exhibit Tucker's congruence coefficients.

In the case of the Tucker's congruence coefficient measure almost in all situation we obtain better results. The \ICA \ method essentially better recovers original signals. In Fig.~\ref{fig:image_ICA_1}  we can sow that \ICA \ almost perfectly recovers source signal.


\begin{table*}[!t]
\centering
\scalebox{0.575}{ 
\begin{tabular}{ | c | c  | c c c | c c c | c | c | c | c | }
\multicolumn{1}{c}{} & \multicolumn{1}{c}{\ICA}  & \multicolumn{3}{c}{FastICA} & \multicolumn{3}{c}{Infomax} & \multicolumn{1}{c}{JADE} & \multicolumn{1}{c}{PearsonICA} & \multicolumn{1}{c}{ProDenICA} & \multicolumn{1}{c}{FixNA} \\ 
 &  &  logcosh & exp & kurtosis & tanh & tangent & logistic &  & & &  \\
\hline
4.1.01 & \bf-0.9818  &  0.5481  &  -0.5457  &  -0.5485  &  0.548  &  -0.5484  &  -0.548  &  -0.5492  &  -0.5308  &  -0.0013  &  0.5503  \\
4.1.02 & \bf0.992  &  0.6696  &  0.6644  &  0.6707  &  0.6695  &  0.6705  &  0.6695  &  0.6726  &  0.6696  &  -0.0981  &  -0.6761  \\ \hline
4.1.06 & \bf-0.9609  &  -0.4297  &  -0.4297  &  -0.4296  &  -0.4297  &  -0.4297  &  -0.4296  &  -0.4296  &  -0.4297  &  0.4297  &  0.0148  \\
4.1.03 & \bf0.5664  &  0.2062  &  0.2062  &  0.2057  &  0.2061  &  0.206  &  0.2058  &  0.2058  &  -0.2062  &  0.207  &  0.0127  \\ \hline
4.2.04 & \bf-0.5034  &  0.0506  &  0.0528  &  -0.0499  &  0.0505  &  -0.0512  &  0.0508  &  0.0397  &  0.3123  &  -0.3164  &  0.1461  \\
5.2.10 & 0.2893  &  -0.0719  &  -0.0749  &  0.0709  &  -0.0717  &  0.0727  &  -0.0722  &  -0.057  &  -0.4275  &  \bf0.4334  &  -0.1979  \\ \hline
4.2.02 & \bf0.2305  &  -0.0376  &  0.0203  &  -0.0017  &  0.0377  &  0.0265  &  0.0061  &  -0.0093  &  -0.1228  &  0.1282  &  0.1235  \\
5.2.08 & \bf0.5717  &  0.1037  &  -0.0625  &  -0.0097  &  -0.1039  &  -0.0773  &  -0.0285  &  0.0086  &  -0.2913  &  -0.3091  &  -0.2931  \\ \hline
boat.512 & \bf 0.3593  &  0.0351  &  0.0314  &  -0.056  &  0.0343  &  -0.0449  &  0.0298  &  0.0356  &  -0.1046  &  -0.0461  &  0.3175  \\
5.3.01 & 0.4316  &  0.0078  &  0.0138  &  -0.0262  &  0.0091  &  -0.008  &  0.0164  &  0.007  &  0.1061  &  0.0486  &  \bf -0.5303  \\ \hline
elaine.512 & \bf0.5874  &  0.32  &  0.32  &  -0.32  &  0.32  &  -0.32  &  0.32  &  0.32  &  -0.32  &  0.0287  &  0.2282  \\
4.2.03 & -0.0226  &  -0.3196  &  -0.3196  &  0.3201  &  -0.3196  &  0.3199  &  -0.3196  & \bf -0.3202  &  -0.3195  &  -0.048  &  -0.2554  \\ \hline
119082 & \bf0.9987  &  0.5736  &  0.5736  &  0.5731  &  0.5737  &  0.5733  &  0.5735  &  0.5735  &  -0.032  &  0.5744  &  0.3695  \\
157055 & \bf0.389  &  -0.3619  &  -0.3619  &  -0.3618  &  -0.3619  &  -0.3619  &  -0.3619  &  -0.3619  &  0.0046  &  0.3619  &  -0.2446  \\ \hline
42049 & \bf -0.7493  &  0.3009  &  0.3028  &  -0.299  &  -0.3005  &  -0.3031  &  -0.3007  &  -0.2898  &  0.2596  &  0.0421  &  0.142  \\
220075 & 0.4359  &  -0.5087  &  -0.5154  &  0.503  &  0.5074  &  \bf0.5168  &  0.5081  &  0.4789  &  0.4838  &  -0.0645  &  -0.1839  \\ \hline
43074 & \bf-0.7371  &  0.0344  &  0.0323  &  0.0429  &  0.0348  &  0.0404  &  0.0342  &  0.0324  &  0.0891  &  0.3925  &  0.2458  \\
295087 & -0.3997  &  -0.048  &  -0.0458  &  -0.0566  &  -0.0484  &  -0.0541  &  -0.0478  &  -0.0459  &  -0.1035  & \bf 0.4015  &  -0.2406  \\ \hline
38092 & \bf -0.5949  &  0.0555  &  0.0564  &  0.031  &  -0.0553  &  0.041  &  0.0557  &  0.0375  &  0.0535  &  0.4036  &  0.2614  \\
167062 & 0.3255  &  -0.0025  &  -0.0041  &  0.0425  &  0.0021  &  0.0241  &  -0.0029  &  0.0306  &  0.0011  &  \bf 0.7404  &  -0.5495  \\ \hline

\end{tabular}
}
\caption{The Tucker's congruence coefficient measure between original images and results of different ICA algorithms.}
\label{tab:congru_img_1}
\end{table*}

\subsection{Cocktail-party problem}
In this subsection we compare our method with classical ones in the case of cocktail-party problem. 
Imagine that you are in a room where two people are speaking simultaneously. You have two microphones, which you hold in different locations. The microphones give you two recorded time signals, which we could interpret as mixed signal $\x$. Each of these recorded signals is a weighted sum of the speech signals emitted by the two speakers, which we denote by $\s$. 
The cocktail-party problem is to estimate the two original speech signals. 

In our experiments we use signal obtained by mixing synthetic sources\footnote{We use signals from \url{http://research.ics.aalto.fi/ica/cocktail/cocktail_en.cgi}.} (similar as before we use mixing matrix $A = \begin{bmatrix} 1 & 1  \\ 1 & -1  \end{bmatrix} $). 
Comparison between methods we present in Tab. \ref{tab:congru_sound_1}. In the case of cocktail-party problem our method recovers  
sources signal better then classical methods. 


\begin{table*}[!t]
\centering
\scalebox{0.6}{ 
\begin{tabular}{ | c | c  | c c c | c c c | c | c | c | c | }
\multicolumn{1}{c}{} & \multicolumn{1}{c}{\ICA}  & \multicolumn{3}{c}{FastICA} & \multicolumn{3}{c}{Infomax} & \multicolumn{1}{c}{JADE} & \multicolumn{1}{c}{PearsonICA} & \multicolumn{1}{c}{ProDenICA} & \multicolumn{1}{c}{FixNA} \\ 
 &  &  logcosh & exp & kurtosis & tanh & tangent & logistic & & & & \\
\hline
source 1 & \bf 0.1597  &  0.1097  &  0.1096  &  0.1101  &  0.1097  &  0.11  &  0.1097  &  0.1101  &  0.1097  &  0.1412  &  0.109  \\
source 2 & 0.7739  &  0.7705  &  0.7713  &  0.7672  &  0.7705  &  0.7685  &  0.7705  &  0.7704  &  0.7704  &  \bf 0.9998  &  0.7751  \\ \hline
source 2 & \bf0.1388  &  0.0899  &  0.0899  &  0.0908  &  0.0899  &  0.0899  &  0.0899  &  0.0908  &  0.0899  &  0.0984  &  0.0907  \\
source 3 & 0.9435  &  0.9075  &  0.9076  &  0.898  &  0.9074  &  0.907  &  0.9074  &  0.9075  &  0.9075  &  \bf0.9989  &  0.8988  \\ \hline
source 3 & \bf0.1985  &  0.079  &  0.0791  &  0.079  &  0.079  &  0.079  &  0.079  &  0.079  &  0.0789  &  0.0843  &  0.0791  \\
source 4 & 0.8453  &  0.8887  &  0.8882  &  0.8889  &  0.8887  & \bf0.8892  &  0.8887  &  0.8898  &  0.8898  &  0.8459  &  0.8882  \\ \hline
source 4 & \bf0.232  &  0.0989  &  0.0989  &  0.099  &  0.0989  &  0.0989  &  0.0989  &  0.099  &  0.0989  &  0.1153  &  0.0989  \\
source 5 & 0.7679  &  0.7798  &  0.7799  &  0.7793  &  0.7798  &  0.7798  &  0.7798  &  0.7801  &  0.7801  &  \bf0.9344  &  0.7796  \\ \hline
source 5 & \bf0.1728  &  0.0989  &  0.099  &  0.0988  &  0.0989  &  0.0989  &  0.0989  &  0.0989  &  0.0989  &  0.0963  &  0.0987  \\
source 6 & 0.9424  &  0.9245  &  0.9243  &  0.9256  &  0.9246  &  0.925  &  0.9246  &  0.9245  &  0.9245  &  \bf0.9729  &  0.9273  \\ \hline
source 6 & \bf0.15  &  0.0404  &  0.0404  &  0.0402  &  0.0404  &  0.0404  &  0.0404  &  0.0402  &  0.0404  &  0.0567  &  0.0402  \\
source 7 & 0.7417  &  0.7129  &  0.7134  &  0.707  &  0.7132  &  0.7125  &  0.7129  &  0.7124  &  0.7124  &  \bf0.9998  &  0.7099  \\ \hline
source 7 & \bf0.1036  &  0.0839  &  0.084  &  0.0839  &  0.0839  &  0.0839  &  0.0839  &  0.0839  &  0.084  &  0.093  &  0.0836  \\
source 8 & 0.908  &  0.9016  &  0.9015  &  0.9019  &  0.9019  &  0.9019  &  0.9017  &  0.9014  &  0.9014  &  \bf0.9999  &  0.9056  \\ \hline
source 8 & 0.1166  &  0.1153  &  0.1156  &  0.1145  &  0.1152  &  0.1148  &  0.1153  &  0.1155  &  0.1149  &  \bf0.1427  &  0.1147  \\
source 9 & 0.8212  &  0.8136  &  0.8116  &  0.8195  &  0.8141  &  0.8174  &  0.8138  &  0.8165  &  0.8165  &  \bf0.9996  &  0.8176  \\ \hline

\end{tabular}
}
\caption{The Tucker's congruence coefficient measure between original sound and results of different ICA algorithms in the case of cocktail-party problem.}
\label{tab:congru_sound_1}
\end{table*}

\subsection{Hyperspectral Unmixing}

Independent component analysis has been recently
applied into hyperspectral unmixing as a result of its low
computation time and its ability to perform without prior information.
However, when applying ICA for hyperspectral unmixing,
the independence assumption in the ICA model conflicts with
the abundance sum-to-one constraint and the abundance nonnegative
constraint in the linear mixture model, which affects the
hyperspectral unmixing accuracy. Nevertheless, ICA was recently applied in this area \cite{wang2015abundance,caiafa2008blind}. In this subsection we apply simple example which shows that our method can by used for spectral data.

Urban data  \cite{fyzhu2014IJPRSSSNMF,fyzhu2014TIPDgSNMF,fyzhu2014JSTSPRRLbSF} is one of the most widely used hyperspectral data-sets used in the hyperspectral unmixing study. Each image has $307 \times 307$ pixels, each of which corresponds to a $2 \times 2$ m area. In this image, there are 210 wavelengths ranging from 400 nm  to 2500 nm, resulting in a spectral resolution of 10 nm. After the channels 1--4, 76, 87, 101--111, 136--153 and 198--210 are removed (due to dense water vapor and atmospheric effects), there remain 162  channels (this is a common preprocess for hyperspectral unmixing analyses). There is ground truth \cite{fyzhu2014IJPRSSSNMF,fyzhu2014TIPDgSNMF,fyzhu2014JSTSPRRLbSF}, which contains 4 channels: \#1 Asphalt, \#2 Grass, \#3 Tree and \#4 Roof.

A highly mixed area is cut from the original data set in this experiment (similar example was showed in \cite{wang2015abundance}), with the size of $200 \times 150$ pixels. 

In our experiment we apply various ICA methods and report the Tucker's congruence coefficient measure between each layer and the closest reference channel, see Fig. \ref{fig:spec_1}. \ICA \ and ProDenICA give layers which contain more information then the other approaches. Distance between four best  channels to the reference ones we present in Tab. \ref{tab:spec}. 

\begin{table*}[!t]
\centering
\scalebox{0.7}{ 
\begin{tabular}{ | c | c  | c c c | c c c | c | c | }
\multicolumn{1}{c}{} & \multicolumn{1}{c}{\ICA}  & \multicolumn{3}{c}{FastICA} & \multicolumn{3}{c}{Infomax}  & \multicolumn{1}{c}{PearsonICA} & \multicolumn{1}{c}{ProDenICA}  \\ 
 &  &  logcosh & exp & kurtosis & tanh & tangent & logistic & &  \\
\hline
\#1 Asphalt  &\bf 0.6774 &  0.2859 & 0.2864 & -0.2595 & -0.2972 & -0.2954 &  -0.2972 &  0.20978 & 0.4928  \\
\#2 Grass  & \bf -0.7784 &  -0.2746 & -0.2605 &  -0.2798 &  -0.2814 & -0.2816 & -0.2814 &  -0.2412 & -0.4323  \\
\#3 Tree  & \bf 0.7267 & 0.2338 &  0.2717 &  -0.2547 &  0.2441 & 0.2354 &  0.2442 &   0.2482 & -0.5961  \\
\#4 Roof &\bf 0.6666 &  -0.4256 &  0.4279 &  0.4167 & -0.4244 &  0.4301 & -0.4244 &  0.4193 &  -0.6128  \\

\end{tabular}
}
\caption{The Tucker's congruence coefficient measure between reference layers and results of different ICA algorithms in the case of the urban data set.}
\label{tab:spec}
\end{table*}

\begin{figure*}[!t]
\normalsize
\begin{center}
\includegraphics[width=5in]{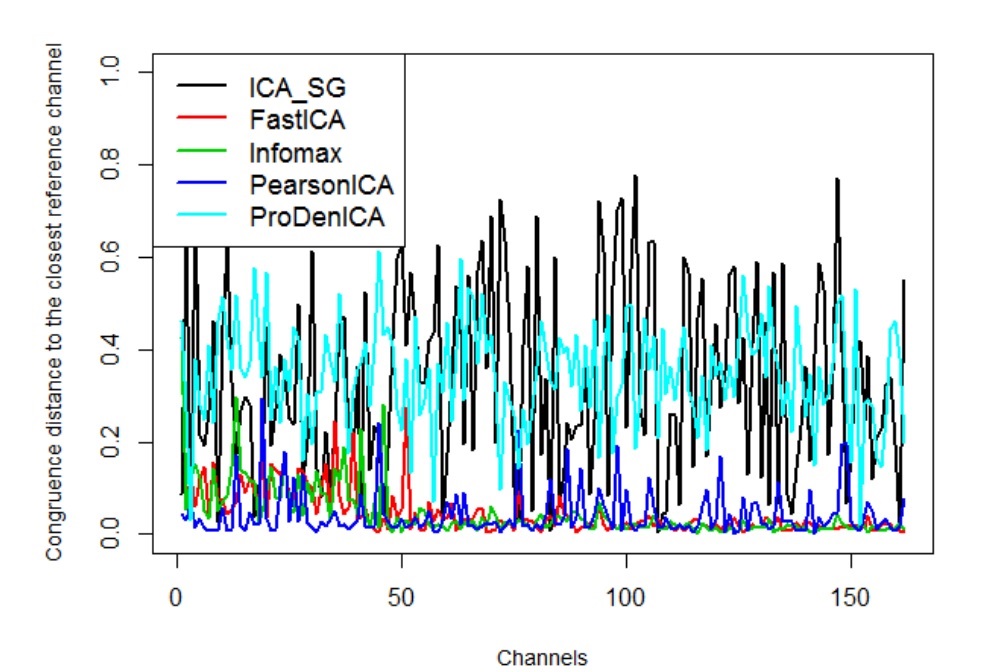}
\end{center}
\caption{Congruence distance between layers obtain by different ICA algorithms and the closest reference channel.}
\label{fig:spec_1}
\end{figure*}


\subsection{EEG}

At the end of this section we present how our method works in the case of EEG signals. In this context, ICA is applied to many different task like
eye movements, blinks, muscle, heart and line noise e.t.c.. 
In this experiment we concentrate on eye movement and blink artifacts. 
Our goal here is to demonstrate that our method is capable of
finding artifacts in real EEG data. However,
we emphasize that it does not provide a complete solution
to any of these practical problems. Such a solution usually
entails a significant amount of domain-specific knowledge
and engineering. Nevertheless, from these preliminary
results with EEG data, we believe that
the method presented in this paper provides a reasonable
solution for signal separation, which is simple and
effective enough to be easily customized for a broad range
of practical problems.

For EEG analysis, the rows of the input matrix $\x$ are the EEG signals recorded at
different electrodes, the rows of the output data matrix $\s = W\x$ are time courses of
activation of the ICA components, and the columns of the inverse matrix, $W$, give
the projection strengths of the respective components onto the scalp sensors. 

One EEG data set used in the analysis was collected from 40 scalp electrodes (see Fig. \ref{fig:EEG} a)). The second and the third are located very near to eye and can be understood as a base (we can use them for removing eye blinking artifacts). In Fig. \ref{fig:EEG} b) we present signals obtained by \ICA.  The scale of this figure is large but we can find the data which have spikes exactly in the same place as the two base signals (see Fig. \ref{fig:EEG} c)). After removing selected signal and going back to the original situation we obtain signal (see Fig. \ref{fig:EEG} d)) without eye blinking artifacts (compare Fig. \ref{fig:EEG} a) with  Fig. \ref{fig:EEG} d)).

\begin{figure*}[!t]
\normalsize
\begin{center}
\includegraphics[width=5in]{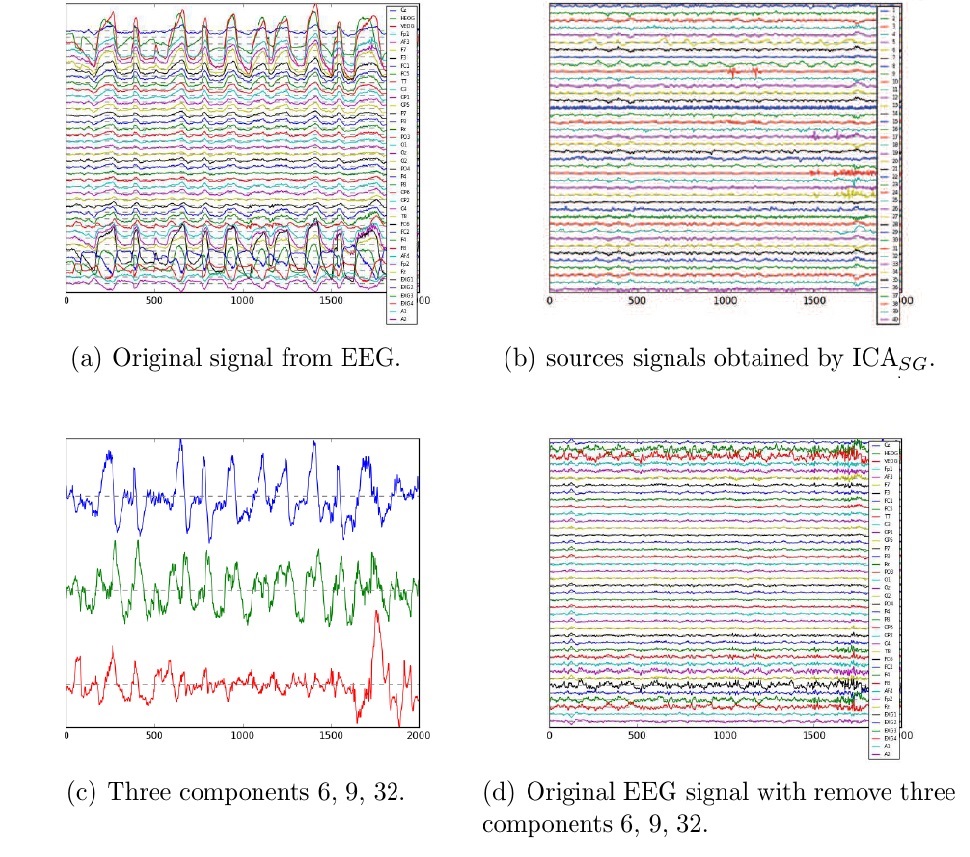}
\end{center}
\caption{Results of \ICA in the case of EEG data.}
\label{fig:EEG}
\end{figure*}


\section{Conclusion}

In our work we introduce and explore a new approach to ICA which is based on the asymmetry of the data. 
Roughly speaking in our approach, instead of approximating the data by product of densities with heavy tails, we approximate it by a product of 
asymmetric densities -- the Split Gaussian distribution.
Contrary to classical approaches which consider third or fourth central moment, our algorithm in practice is based on second moments. This is a consequence of the fact that Split Gaussian distributions arise from merging two opposite halves of normal distributions in their common mode. Therefore  we use only second order moments to describe skewness in dataset, and therefore we obtain an effective ICA method which is resistant to outliers.

We verified our approach on images, sound and EEG data.
In the case of source signal reconstructing our approach gives essentially better results (better recover original signals).
The main reason is such that kurtosis is very sensitive to the outliers and that the asymmetry of the data is more popular than heavy tails in real data sets.




\section{Appendix A}\label{App:A}

\begin{proof}[Proof of Theorem \ref{the:min}.]
Let $X=\{ \x_1, \ldots, \x_n \}$.
We write 
$$
\z_i= W(\x_i-m), \quad \z_{ij}= \w_j^T(\x_i-m),
$$
for observation $i$, where $i=1,\ldots,n$ and coordinates $j=1,\ldots,d$.

Let us consider the likelihood function, i.e. 
$$
\begin{array}{l}
L(X;\m,W,\sigma,\tau) = \prod\limits_{i=1}^{n} GSN_d(\x_i ; \m,W,\sigma,\tau) \\[6pt] 
=\prod\limits_{i=1}^{n} | \det(W)|  \prod\limits_{j=1}^{d} SN(  \w_j^T (\x_i - \m) ; 0 , \sigma_j^2, \tau_j^2)\\[6pt]
=\Big( c_1|\det(W)| \Big)^{n} \Big( \prod\limits_{j=1}^{d} \sigma_j(1+\tau_j) \Big)^{-n} 
\prod\limits_{i=1}^{n} \prod\limits_{j=1}^{d} \exp \Big[ -\frac{1}{2\sigma_j^2}z_{ij}^2 (\1_{ \{ z_{ij} \leq 0 \} } + \tau_{j}^{-2} \1_{ \{ z_{ij} > 0 \} }) \Big],
\end{array}
$$
where 
$
c_1=\left( \sqrt{\tfrac{2}{\pi}} \right)^{d}.
$
Now we take the log-likelihood function, i.e.
$$
\begin{array}{l}
\ln(L(X;\m,W,\sigma,\tau)) \\[6pt]
=\ln \bigg( \Big( c_1|\det(W)| \Big)^{n} \Big( \prod\limits_{j=1}^{d} \sigma_j(1+\tau_j) \Big)^{-n} \bigg) + 
 \sum\limits_{i=1}^{n} \sum\limits_{j=1}^{d} \Big[ -\frac{1}{2\sigma_j^2}z_{ij}^2 (\1_{ \{ z_{ij} \leq 0 \} } + \tau_{j}^{-2} \1_{ \{ z_{ij} > 0 \} })\Big]  \\[6pt]
= \ln \bigg( \Big( c_1|\det(W)| \Big)^{n} \Big( \prod\limits_{j=1}^{d} \sigma_j(1+\tau_j) \Big)^{-n} \bigg)  -
  \frac{1}{2} \sum\limits_{j=1}^{d} \Big( \sigma_j^{-2} \sum\limits_{i \in I_{j}}    z_{ij}^2   + \frac{\sigma_j^{-2}}{\tau_{j}^{2} }  \sum\limits_{i \in I_{j}^{c}}   z_{ij}^2  \Big) \\[6pt]
= \ln \bigg( \Big( c_1|\det(W)| \Big)^{n} \Big( \prod\limits_{j=1}^{d} \sigma_j(1+\tau_j) \Big)^{-n} \bigg)  - 
 \sum\limits_{j=1}^{d} \frac{1}{2\sigma_j^{2}} \Big(  s_{1j}  + \frac{1}{\tau_{j}^{2} }  s_{2j}  \Big).
\end{array}
$$

We fix  $\m$, $W$ and maximize the log-likelihood function over $\tau$ and $\sigma$.
In such a case we have to solve the following system of equations
$$
\begin{array}{l}
\frac{\partial  \ln ( L(X;\m,W,\sigma,\tau) ) }{\partial \sigma_j} = -\frac{n}{\sigma_j} +  \sigma_j^{-3} (s_{1j} + \tau_j^{-2} s_{2j} )
 =0, \\[6pt] 
 \frac{\partial  \ln ( L(X;\m,W,\sigma,\tau) ) }{\partial \tau_j} = - \frac{n}{1+\tau_j} + \frac{s_{2j}}{\tau_j^{3}\sigma_j^{2}} =0 , 
\end{array}
$$
for  $ j=1,\ldots,d$.
By simple calculations we obtain the expressions for the estimators
\begin{align*}
\hat{\sigma}_j^2(\m,W) = 
\tfrac{1}{n} s_{1j}^{2/3} g_{j}(\m,W), \qquad
\hat{\tau}_{j}(\m,W) = \bigg( \frac{s_{2j}}{s_{1j}} \bigg)^{1/3}.
\end{align*}
Substituting it into the log-likelihood function,
we get
$$
\begin{array}{l}
\hat{L}(\m,W) = \bigg( \frac{2}{\pi} \bigg)^{\frac{dn}{2}}  |\det(W)|^{n} \cdot \Big( \prod\limits_{j=1}^{d} \frac{1}{\sqrt{n}} g_j(\m,W)^{\frac{3}{2}} \Big)^{-n}  e^{-\frac{dn}{2}}\\[6pt]
= \bigg( \frac{2n}{\pi e} \bigg)^{\frac{dn}{2}}  \Big( \frac{1}{|\det(W)|^{\frac{2}{3}}} \prod\limits_{j=1}^{d} g_j(\m,W) \Big)^{-\frac{3n}{2}}. 
\end{array}
$$
\end{proof}

\section{Appendix B}\label{App:B}

\begin{proof}[Proof of Theorem \ref{ther:grad}.]
Let us start with the partial derivative of $\ln({l})$ with respect to $\m$. We have
$$
\begin{array}{l}
\frac{\partial \ln {l}(X;\m,W)}{\partial \m_k} =
\sum \limits_{j=1}^d \frac{\partial \ln ({g}_j(\m,W))}{\partial \m_k} = \sum\limits_{j=1}^d \frac{1}{{s}_{1j}^{\frac{1}{3}} + {s}_{2j}^{\frac{1}{3}}} \frac{\partial ({s}_{1j}^{\frac{1}{3}} + {s}_{2j}^{\frac{1}{3}})}{\partial \m_k} 
 \sum \limits_{j=1}^d \frac{1}{{s}_{1j}^{\frac{1}{3}} + {s}_{2j}^{\frac{1}{3}}} \bigg(
\frac{1}{3 {s}_{1j}^{\frac{2}{3}}} \frac{\partial {s}_{1j}}{\partial \m_k} +
\frac{1}{3 {s}_{2j}^{\frac{2}{3}}} \frac{\partial {s}_{2j}}{\partial \m_k}
\bigg).
\end{array}
$$
Now, we need $\frac{\partial {s}_{1j}}{\partial \m_k}$ and $\frac{\partial {s}_{2j}}{\partial \m_k}$, therefore
$$
\begin{array}{l}
\frac{\partial {s}_{1j}}{\partial \m_k} = 
\sum\limits_{i \in {I}_j} \frac{\partial [\w^T_j (\x_i - \m)]^2}{\partial \m_k} = \sum\limits_{i \in {I}_j} 2 \w^T_j (\x_i - \m) \frac{\partial \w^T_j (\x_i - \m)}{\partial \m_k} = 
 \sum\limits_{i \in {I}_j} - 2 \w^T_j (\x_i - \m) \w_{jk}.
\end{array}
$$
Analogously we get
$$
\begin{array}{l}
\frac{\partial {s}_{2j}}{\partial \m_k} = \sum\limits_{i \in {I}_j^c} -2 \w^T_j (\x_i - \m) \w_{jk}.
\end{array}
$$
Hence 
$$
\begin{array}{l}
\frac{\partial \ln {l}}{\partial \m_k} =\sum\limits_{j=1}^d \frac{-1}{{s}_{1j}^{\frac{1}{3}} + {s}_{2j}^{\frac{1}{3}}} \bigg(
\frac{1}{3 {s}_{1j}^{\frac{2}{3}}} \sum\limits_{i \in I_j} 2 \w_j^T (\x_i - \m)  \w_{jk} +
\frac{1}{3 {s}_{2j}^{\frac{2}{3}}} \sum\limits_{i \in I_j^c} 2 \w_j^T (\x_i - \m) \w_{jk}
\bigg).
\end{array}
$$

Now we calculate the partial derivative of $\ln {l}(X;\m,W)$ with respect to the matrix $W$. We have
$$
\begin{array}{l}
\frac{\partial \ln {l}(X;\m,W)}{\partial \w_{pk}} = \frac{\partial \ln |\det(W)|^{-\frac{2}{3}}}{\partial \w_{pk}} + \sum\limits_{j=1}^d \frac{\partial \ln ({g}_j(\m,W))}{\partial \w_{pk}}.
\end{array}
$$
To calculate the derivative of the determinant we use Jacobi's formula (see Lemma \ref{jacobi}).
Hence
$$
\begin{array}{l}
\frac{\partial \ln (\det(W)^{-\frac{2}{3}})}{\partial \w_{pk}} = \det(W)^{\frac{2}{3}}  \Big(-\frac{2}{3}\Big)  \det(W)^{-\frac{5}{3}}  \frac{\partial \det(W)}{\partial \w_{pk}} = -\frac{2}{3} \det(W)^{-1}  \mathrm{adj}^T(W)_{pk} \\[6pt]
 = -\frac{2}{3} \frac{1}{\det(W)}  \left[\det(W)  (W^{-1})^T_{pk}\right]= -\frac{2}{3}  (\w^{-1})^T_{pk},
\end{array}
$$
where $(\w^{-1})^T_{pk}$ is the element in the $p$-th row and $k$-th column of the matrix $(W^{-1})^T$. Now we calculate 
$$
\begin{array}{l}
\frac{\partial \ln ({g}_j(\m,W))}{\partial \w_{pk}} = \frac{1}{{s}_{1j}^{\frac{1}{3}} + {s}_{2j}^{\frac{1}{3}}} \frac{\partial ({s}_{1j}^{\frac{1}{3}} + {s}_{2j}^{\frac{1}{3}})}{\partial \w_{pk}}= \frac{1}{{s}_{1j}^{\frac{1}{3}} + {s}_{2j}^{\frac{1}{3}}} \bigg(
\frac{1}{3 {s}_{1j}^{\frac{2}{3}}}  \frac{\partial {s}_{1j}}{\partial \w_{pk}} +
\frac{1}{3 {s}_{2j}^{\frac{2}{3}}}  \frac{\partial {s}_{2j}}{\partial \w_{pk}}
\bigg),
\end{array}
$$
where
$$
\begin{array}{l}
\frac{\partial {s}_{1j}}{\partial \w_{pk}} = \sum\limits_{ i \in {I}_j} \frac{\partial [\w^T_j (\x_i - \m)]^2}{\partial \w_{pk}} = \sum\limits_{ i \in {I}_j} 2 \w^T_j (\x_i - \m) \frac{\partial \w^T_j (\x_i - \m)}{\partial \w_{pk}}=
\\[6pt]
\left\{ \begin{array}{ll}
0, & \text{if} \; j\neq p\\
\sum\limits_{ i \in {I}_p} 2 \w^T_p (\x_i - \m) (\x_{ik} - \m_k), & \text{if} \; j=p\\
\end{array} \right.
\end{array}
$$
and $\x_{ik}$ is the $k$-th element of the vector $\x_i$. Analogously we get
$$\frac{\partial {s}_{2j}}{\partial \w_{pk}} = \left\{ \begin{array}{ll}
0, & \text{if} \; j\neq p,\\
\sum\limits_{ i \in {I}_p^c} 2 \w^T_p (\x_i - \m)  (\x_{ik} - \m_k), & \text{if} \; j=p.
\end{array} \right.
$$
Hence we obtain 
$$
\begin{array}{l}
\frac{\partial \ln {l}}{\partial \w_{pk}} = -\frac{2}{3} (\w^{-1})^T_{pk} + \frac{1}{{s}_{1p}^{\frac{1}{3}} +{s}_{2p}^{\frac{1}{3}}} 
 \bigg(
\frac{1}{3} {s}_{1p}^{-\frac{2}{3}} \sum\limits_{ i \in {I}_p} 2 \w^T_p (\x_i - \m) (\x_{ik} - \m_k)\\[6pt]
+ \frac{1}{3} {s}_{2p}^{-\frac{2}{3}} \sum\limits_{ i \in {I}_p^c} 2 \w^T_p (\x_i - \m)   (\x_{ik} - \m_k) \bigg).
\end{array}
$$

\end{proof}







\section*{Acknowledgment}

Research of P. Spurek was supported by the National Center of Science
(Poland) grant no. 2015/19/D/ST6/01472. 
Research of J. Tabor was supported by the National Center of Science
(Poland) grant no. UMO-2014/13/B/ST6/01792.

\section*{References}



\end{document}